\numberwithin{equation}{section}
\newtheorem{theorem}{Theorem}[section]
\newtheorem{proposition}[theorem]{Proposition}
\newtheorem{corollary}[theorem]{Corollary}
\newtheorem{lemma}[theorem]{Lemma}
\newtheorem{remark}[theorem]{Remark}
\newcommand{\cali}[1]{\mathscr{#1}}
\newcommand{\vol}{\mathop{\mathrm{vol}}}
\newcommand{\ddc}{dd^c}
\newcommand{\PSH}{{\rm PSH}}
\newcommand{\B}{\mathbb{B}}
\newcommand{\C}{\mathbb{C}}
\newcommand{\N}{\mathbb{N}}
\newcommand{\R}{\mathbb{R}}
\renewcommand\P{\mathbb{P}}
\title{\bf  Higher Lelong numbers versus full Monge-Amp\`ere mass}
\providecommand{\keywords}[1]{\textbf{\textit{Keywords:}} #1}
\providecommand{\subject}[1]{\textbf{\textit{Mathematics Subject Classification 2010:}} #1}
\author{Do Duc Thai and Duc-Viet Vu}
\newcommand{\Addresses}{{
		\bigskip
		\footnotesize
\noindent
		\textsc{Duc-Viet Vu, University of Cologne, Division of Mathematics, Department of Mathematics and Computer Science, Weyertal 86-90, 50931, K\"oln,  Germany.}
		\noindent
		\par\nopagebreak
		\noindent
		\textit{E-mail address}: \texttt{vuviet@math.uni-koeln.de}
		\newline
	
		\noindent
		\textsc{Do Duc Thai, Department of Mathematics, Hanoi National University of Education, 136 XuanThuy str., Hanoi, Vietnam.}
		\noindent
		\par\nopagebreak
		\noindent
		\textit{E-mail address}: \texttt{doducthai@hnue.edu.vn}	
}}
\date{\today}
\begin{document}
\maketitle
\begin{abstract} We prove a recent conjecture of Chi Li relating the notion of higher Lelong numbers to that of full Monge-Amp\`ere mass. 
\end{abstract}
\noindent
\keywords  {higher Lelong number}, {full  Monge-Amp\`ere mass}, {non-pluripolar product}.
\\

\noindent
\subject{32U15},  {32Q15}.



\section{Introduction}

Let $n\ge 2$ be an integer. Let $z$ be the standard coordinates on $\C^n$.  Denote by $\|z\|$ the Euclidean norm of $z$. Let $\psi$ be a plurisubharmonic function (psh for short) on an open neighborhood $U$ around $0$ in $\C^n$ such that $\psi$ is locally bounded outside $0$. Recall that $d^c:= \frac{i}{2\pi} (\bar \partial - \partial)$, and $\ddc = \frac{i}{\pi} \partial \bar \partial$.  For $1 \le k \le n$, \emph{the higher Lelong number} of $\psi$ of order $k$ is given by 
$$e_k(\psi):= \int_{\{0\}} (\ddc \psi)^k \wedge (\ddc \log \|z\|)^{n-k}.$$
The number $e_1(\psi)$ is the Lelong number of $\psi$ at $0$ and  is usually denoted by $\nu(\psi,0)$. The higher Lelong numbers are closely related to the notion of log canonical threshold. We refer to \cite{Demailly-Hiep} for a fundamental lower bound of the log canonical threshold in terms of higher Lelong numbers, see also \cite{deFernex-Ein-Mustata,deFernex-Ein-Mustata2,Pukhlikov} for applications to birational geometry.  If $g_1, \ldots, g_N$ are holomorphic functions defined on an open neighborhood of $0$ in $\C^n$ and $\psi=\log \sum_{j=1}^N |g_j|$, then $e_n(\psi)$ is equal to the Samuel multiplicity of the ideal generated by germs of $g_1, \ldots, g_N$ at $0$; see \cite{Demailly_localalgebra}. These facts are instances illustrating the relevance of higher Lelong numbers to algebraic geometry. For more information about those numbers, we refer to \cite{Demailly-Hiep,Kim-Rashkovskii,Kim-Rassh-assymptotic} and references therein.

A major problem in the theory of singularity of psh functions is to clarify the relation between higher Lelong numbers and the ideal multiplier sheaves associated to $\psi$. In this research direction, Demailly conjectured that for every psh function $\psi$ locally bounded outside $0$, the sequence of higher Lelong numbers at $0$ of psh functions with analytic singularities approximating $\psi$ in his analytic regularization of psh functions would converge to those of $\psi$. If this were true, it would imply the well-known conjecture by Guedj and Rashkovskii independently  (see \cite{DGZ,Rashkovskii-open}) stating that $e_n(\psi)=0$ if and only if $e_1(\psi)=0$ (recall however that by \cite{Demailly-Hiep}, one has that $e_k(\psi)=0$ if and only if $e_1(\psi)=0$ for every $1 \le k \le n-1$). The latter conjecture has remained largely open although some important cases were confirmed such as when $\psi$ is toric (\cite{Kim-Rashkovskii}) or has sup-analytic singularities (\cite{Boucksom-Favre-Jonsson,Kim-Rassh-assymptotic,R-approx-psh}). We refer to  \cite{Kim-Rassh-assymptotic} for a detailed summary of developments regarding the  Guedj-Rashkovskii conjecture.  

Recently, Chi Li \cite{ChiLi} constructed a counter-example to the above conjecture of Demailly. The importance of such a counter-example lies in the fact that it implies that in general the higher Lelong numbers can not be recovered using the usual analytic approximation from above as in  Demailly's analytic regularisation theorem. Nevertheless since the example in \cite{ChiLi} only concerns with psh functions of positive Lelong number at $0$, it gives us no clear clue how to deal with the Guedj-Rashkovskii conjecture. In the same paper Chi Li posed a conjecture extending his construction to a much larger natural class of psh functions (which still have positive Lelong number at $0$).  The aim of this paper is to prove this conjecture. 

To go into details, we need a few more things.  We are interested in $\psi$ such that $e_1(\psi)\ge 1$, or in other words, $\psi \le \log \|z\|+ O(1)$ near $0$. In this case, by monotonicity (see  \cite[Lemma 4.1]{AhagCegrellHiepMA} or \cite{Demailly_ag}), one has 
\begin{align}\label{ine-sosanhekj}
1 \le e_1(\psi) \le \cdots  \le e_n(\psi).
\end{align}
 Let $\P^{n-1}$ be the complex $(n-1)$-dimensional projective space, and  $\pi:\C^n \backslash \{0\} \to \P^{n-1}$ be the natural projection. Let $\omega_{n-1}$ be the Fubini-Study form on $\P^{n-1}$, and let $u$ be an $\omega_{n-1}$-psh function. Hence $\ddc u + \omega_{n-1} \ge 0$.  Since $\pi^* \omega_{n-1}= \ddc \log \|z\|$, we see that $\pi^* u+ \log \|z\|$ is a psh function on $\C^{n} \backslash \{0\}$   which is bounded from above near $0$. Thus it can be extended to a psh function on $\C^n$.  Put 
$$\varphi:= \max \{ \pi^* u+\log \|z\|, 2 \log \|z \|\}.$$

In \cite{ChiLi}, it was predicted that $e_n(\varphi)=1$ if and only if $u$ is of full Monge-Amp\`ere mass, \emph{i.e.,} 
$$\int_{\P^{n-1}}\langle (\ddc u+\omega_{n-1})^{n-1}\rangle = 1,$$ 
where for every closed positive $(1,1)$-current $T$, the expression $\langle T^n \rangle$ denotes the non-pluripolar product of $T, \ldots, T$ ($n$ times). The conjecture was verified in \cite{ChiLi} when $u$ has analytic singularities, or $e^u$ is continuous and $\{u=-\infty\}$ contained in a compact subset of an affine chart of $\P^{n-1}$.   As already observed in \cite{ChiLi}, this conjecture can be seen as a local version of a characterization of quasi-psh functions of full Monge-Amp\`ere mass obtained in \cite{Darvas-kahlerclass,Lu-Darvas-DiNezza-mono}. We refer to \cite{BT_fine_87,BEGZ,GZ-weighted,Viet-generalized-nonpluri} for information about the notion of non-pluripolar products. Let 
$$\tilde{e}_k(u):= 1- \int_{\P^{n-1}} \langle (\ddc u+\omega_{n-1})^k \rangle \wedge \omega_{n-1}^{n-1-k}.$$
These numbers satisfy
\begin{align}\label{ine-ehigher}
0 \le \tilde{e}_1(u) \le \cdots \le \tilde{e}_{n-1}(u) \le 1
\end{align}
by the monotonicity of non-pluripolar products;  see (\ref{ine-moniPn}) below. If $u$ is bounded, then $\tilde{e}_k=0$ for every $1 \le k \le n-1$. Hence, one can see that $(\tilde{e}_k)_k$ encodes information about the singularity of $u$. Indeed it was proved that $\tilde{e}_k(u)>0$ if $u$ has a positive generic Lelong number along an analytic set of dimension at least $n-1-k$. For more precise and quantitative estimates, we refer to \cite{Vu_lelong-bigclass,Vu_lelong-bignef-quantitative} and references therein. The following theorem is our main result confirming in particular the above conjecture of Chi Li. 

\begin{theorem} \label{the-chili-lelong}  There is a constant $C>0$ independent of $u$ such that
$$\tilde{e}_{k-1}(u) \le e_k(\varphi) -1 \le  C \tilde{e}_{k'}(u)$$
for every $1 \le k \le n$, where $k':= \min \{k, n-1\}$.  In particular, $e_n(\varphi)=1$ if and only if $u$ is of full Monge-Amp\`ere mass.
\end{theorem}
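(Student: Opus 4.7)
The plan is to analyze $e_k(\varphi)$ via Stokes' theorem on shrinking balls around the origin and to sandwich the result between the non-pluripolar quantities on the right-hand side.

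Since $\varphi\ge 2\log\|z\|$, $\varphi$ is locally bounded outside the origin, so $(\ddc\varphi)^k$ is a well-defined closed positive Bedford-Taylor current on $\C^n$, and $e_k(\varphi)$ equals the mass of $(\ddc\varphi)^k\wedge(\ddc\log\|z\|)^{n-k}$ at the origin. I apply Stokes' theorem to $B_r=\{\|z\|<r\}$ to obtain
\begin{equation*}
\int_{B_r}(\ddc\varphi)^k\wedge(\ddc\log\|z\|)^{n-k}=\int_{\partial B_r}\dc\varphi\wedge(\ddc\varphi)^{k-1}\wedge(\ddc\log\|z\|)^{n-k},
\end{equation*}
and analyze the boundary integral as $r\to 0^+$. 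Splitting $\partial B_r$ via the sets $\{\pi^*u>\log r\}$, $\{\pi^*u<\log r\}$ and the interface $\{\pi^*u=\log r\}$: on the first, $\varphi=\pi^*u+\log\|z\|$ and $\ddc\varphi=\pi^*(\ddc u+\omega_{n-1})$; then the Hopf-fibration fiber integration $\int_{S^1}\dc\log\|z\|=1$ (together with the fact that the $\pi^*\dc u$ contribution vanishes by degree-counting on $\P^{n-1}$) reduces the corresponding boundary integral to
\begin{equation*}
\int_{\{u>\log r\}}\langle(\ddc u+\omega_{n-1})^{k-1}\rangle\wedge\omega_{n-1}^{n-k}\longrightarrow 1-\tilde e_{k-1}(u)\quad\text{as }r\to 0^+.
\end{equation*}
On the second region $\varphi=2\log\|z\|$ and its contribution is $2^k\int_{\{u<\log r\}}\omega_{n-1}^{n-1}\to 0$; what remains is the interface contribution, encoding the gradient jump of $\varphi$ across $\{\pi^*u=\log\|z\|\}$, which is precisely what distinguishes $e_k(\varphi)$ from $1-\tilde e_{k-1}(u)$.

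For the lower bound $\tilde e_{k-1}(u)\le e_k(\varphi)-1$, one expects the interface contribution to carry at least $2\tilde e_{k-1}(u)$ in the limit, reflecting the doubling of the coefficient of $\log\|z\|$ from $1$ to $2$ across the interface. Combined with the good-part limit $1-\tilde e_{k-1}(u)$, this yields $e_k(\varphi)\ge 1+\tilde e_{k-1}(u)$. For the upper bound $e_k(\varphi)-1\le C\tilde e_{k'}(u)$, one bounds the interface contribution from above by a Chern-Levine-Nirenberg-type estimate comparing the mass of $(\ddc\varphi)^k$ near the polar cone with the non-pluripolar mass loss $\tilde e_{k'}(u)$; the monotonicity \eqref{ine-ehigher} then absorbs the gap between $\tilde e_{k-1}(u)$ and $\tilde e_{k'}(u)$.

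The main obstacle is controlling the interface contribution for unbounded $u$. The natural monotone bounded approximation $u_j:=\max(u,-j)\downarrow u$ is \emph{not} useful: for each bounded $u_j$, the good-part computation already gives $\int_{\P^{n-1}}(\ddc u_j+\omega_{n-1})^{k-1}\wedge\omega_{n-1}^{n-k}=1$ and hence $e_k(\varphi_{u_j})=1$, while in the limit $e_k(\varphi)$ can strictly exceed $1$. All inequalities must therefore be derived directly in the non-pluripolar setting, using delicate monotonicity results such as those of Darvas-DiNezza-Lu \cite{Lu-Darvas-DiNezza-mono} to handle the interface mass for singular $u$.
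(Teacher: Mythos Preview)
Your approach via Stokes' theorem on shrinking balls and decomposition of the boundary sphere into the regions $\{u>\log r\}$, $\{u<\log r\}$ and the interface is a natural geometric picture, but it leaves precisely the hard part unresolved. You yourself identify the gap: the interface contribution---the mass carried by $(\ddc\varphi)^{k-1}$ along the real hypersurface $\{\pi^*u=\log\|z\|\}\cap\partial B_r$ and its interaction with the jump in $\dc\varphi$---is never actually bounded. The assertions that it ``should carry at least $2\tilde e_{k-1}(u)$'' and is ``bounded above by a Chern--Levine--Nirenberg-type estimate'' are hopes rather than arguments: CLN inequalities control mass via sup-norms of potentials, not via non-pluripolar mass defects, and for general unbounded $u$ the interface has no regularity whatsoever. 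Invoking the monotonicity of non-pluripolar products is the right instinct, but without a concrete algebraic decomposition there is nothing for monotonicity to act on; as you note, the approximation $u_j=\max(u,-j)$ discards exactly the information one needs.

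The paper's route circumvents the interface analysis entirely. It compactifies to $\P^n$, blows up at the origin so that the exceptional divisor is $\P^{n-1}$ and $\pi$ extends to a holomorphic submersion $\widehat p:\overline{\widehat E}\to\P^{n-1}$, and rewrites $e_k(\varphi)=2^n-\int_{\overline{\widehat E}}\langle\widehat T^k\rangle\wedge\theta^{n-k}$ for an explicit current $\widehat T$ and closed smooth form $\theta$. For the upper bound, monotonicity is applied directly to the potential inequality $\max\{\widehat p^*u-\rho^*v_1,\rho^*v\}\ge\widehat p^*u+O(1)$, which replaces $\widehat T$ by $\widehat p^*R+\rho^*\omega_n$ and reduces the estimate to push-forward integrals on $\P^{n-1}$. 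For the lower bound, the paper invokes B\l ocki's identity expanding $\langle(\ddc\max\{a,b\}+\theta)^k\rangle$ as a signed sum of mixed non-pluripolar products; each term is then computed exactly (using that $\rho^*v$ has divisorial singularity along $\widehat V$) or bounded from above by monotonicity, and a telescoping cancellation leaves $\tilde e_{k-1}(u)$. The blowup is what makes this work: it converts your heuristic sphere decomposition into genuine intersection theory, with the interface replaced by cohomological identities on $\widehat V\simeq\P^{n-1}$.
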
 

For every psh function $\psi$ on an open subset $U$ of $\C^n$, we recall that  the multiplier ideal sheaf $\cali{I}(\psi)$ is the ideal sheaf whose stalk $\cali{I}_z(\C)$ at $z \in U$ consists of holomorphic germs $f$ at $z$ such that $|f|^2 e^{-2\psi}$ is locally integrable with respect to the Lebesgue measure on $\C^n$ near $z$.
 For psh functions $\psi_1, \psi_2$ defined on an open neighborhood of $0$ in $\C^n$, we recall that $\psi_1$ and $\psi_2$ are \emph{valuatively equivalent at $0$} if 
$$\cali{I}_0(m \psi_1)= \cali{I}_0(m \psi_2)$$
for every constant $m>0$. By  \cite[Corollary 10.18]{Boucksom_l2} (or \cite[Theorem A]{Boucksom-Favre-Jonsson}), it is well-known that  $\psi_1$ are $\psi_2$ are valuatively equivalent at $0$ if and only if for every local smooth modification $\rho: X \to U$ ($U$ is an open neighborhood of $0$) and every irreducible hypersurface $E \subset \rho^{-1}(0)$, the generic Lelong numbers of $\psi_1\circ \rho$ and $\psi_2\circ \rho$ along $E$ are equal.


Recall that a psh germ $\psi$ at $0$ (\emph{i.e.,} an equivalence class of psh functions on open neighborhoods of $0$ which are identical on some small open neighborhood of $0$) is said to have \emph{analytic singulatities} if there is a small enough open neighborhood $U$ of $0$ such that 
$$\psi - c \log (|f_1|+ \cdots+ |f_M|)$$
is a bounded function on $U$ for some holomorphic functions $f_1,\ldots, f_M$ on $U$, and for some constant $c>0$. For every psh germ $\psi$ at $0$, let $\cali{A}_\psi$ be the set of psh germs $\psi'$ with analytic singularities such that $\psi'$ is less singular than $\psi$, that means  $\psi \le \psi'+C$ near $0$ for some constant $C$.

The following consequence of Theorem \ref{the-chili-lelong} tells us that  in general one can not recover higher Lelong numbers of a given psh germ $\psi$ by using those with analytic singularities approximating it from above.   

\begin{corollary} \label{cor-phanvidu} Assume that $u$ has zero Lelong number everywhere and $\ddc u+ \omega_{n-1}$ has mass on some pluripolar set in $\P^{n-1}$. Then we have
$$\sup_{\varphi' \in \cali{A}_\varphi} e_k(\varphi')=1 < e_k(\varphi), \quad \text{ for }\, 2 \le k \le n.$$
\end{corollary}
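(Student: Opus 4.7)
I will apply Theorem~\ref{the-chili-lelong} to deduce $e_k(\varphi)>1$ from the pluripolar-mass hypothesis, and then use it implicitly when analyzing members of $\cali{A}_\varphi$ via associated $\omega_{n-1}$-psh data on $\P^{n-1}$.

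For $e_k(\varphi)>1$ when $k\ge 2$, note that $\ddc u$ having mass on a pluripolar set forces $\tilde e_1(u)>0$, since $\int_{\P^{n-1}}(\ddc u+\omega_{n-1})\wedge\omega_{n-1}^{n-2}=1$ while the non-pluripolar passage strictly reduces this integral. Monotonicity~(\ref{ine-ehigher}) then gives $\tilde e_{k-1}(u)\ge\tilde e_1(u)>0$, and Theorem~\ref{the-chili-lelong} yields $e_k(\varphi)\ge 1+\tilde e_{k-1}(u)>1$.

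For the supremum, the bound $\sup\ge 1$ is realized by $\varphi'=\log\|z\|$, which has analytic singularities, satisfies $\log\|z\|\ge\varphi+O(1)$ (as $\pi^*u$ is bounded above near $0$), and yields $e_k(\log\|z\|)=1$. For the matching upper bound, fix $\varphi'\in\cali{A}_\varphi$, write $\varphi'=c\log(|f_1|+\cdots+|f_M|)+O(1)$, set $d:=\min_i\mult_0 f_i$, and let $F^{(d)}=(f_1^{(d)},\ldots,f_M^{(d)})$ be the degree-$d$ homogeneous initial tuple (with $f_i^{(d)}=0$ whenever $\mult_0 f_i>d$); the case $d=0$ is trivial with $e_k(\varphi')=0$. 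Restricting the inequality $\varphi'(z)\ge\pi^*u(z)+\log\|z\|+O(1)$---valid on the non-polar cone where $\varphi=\pi^*u+\log\|z\|$ near $0$---to radial rays $z=tw$ with $w\in S^{2n-1}$ and $F^{(d)}(w)\ne 0$, and using $|F(tw)|=t^d|F^{(d)}(w)|(1+O(t))$, yields
\[
(cd-1)\log t + c\log|F^{(d)}(w)| + O(1) \;\ge\; u([w]).
\]
Sending $t\to 0^+$ forces $cd\le 1$; freezing $t$ at the boundary of the neighborhood and extending by upper semicontinuity across $\{F^{(d)}=0\}\cup\{u=-\infty\}$ produces the uniform estimate $u\le c\log|F^{(d)}|_{\mathrm{FS}}+O(1)$ on $\P^{n-1}$, where $c\log|F^{(d)}|_{\mathrm{FS}}$ is $\omega_{n-1}$-psh (since $F^{(d)}$ consists of sections of $\Oc(d)$ and $cd\le 1$).

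Finally, $c\log|F^{(d)}|_{\mathrm{FS}}$ carries generic Lelong number $\ge c>0$ along its polar locus $V(F^{(d)})\subset\P^{n-1}$, and the inequality $u\le c\log|F^{(d)}|_{\mathrm{FS}}+O(1)$ forces $\nu(u,\cdot)\ge\nu(c\log|F^{(d)}|_{\mathrm{FS}},\cdot)$; the zero-Lelong hypothesis on $u$ therefore compels $V(F^{(d)})=\emptyset$ in $\P^{n-1}$, i.e., $V(F^{(d)})=\{0\}$ in $\C^n$. Lojasiewicz's inequality then gives $|F^{(d)}(z)|\ge c_0\|z\|^d$ near $0$, which together with $|f_i(z)|\le C\|z\|^d$ yields $|F(z)|\asymp\|z\|^d$, whence $\varphi'=cd\log\|z\|+O(1)$ near $0$. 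Invariance of higher Lelong numbers under bounded perturbation (both $\varphi'$ and $cd\log\|z\|$ are locally bounded outside $0$, so monotonicity applies in both directions) concludes $e_k(\varphi')=(cd)^k\le 1$, completing the argument. The principal obstacle will be the uniform-in-$w$ radial estimate together with the upper-semicontinuous extension of the pointwise inequality from the full-measure subset $\{F^{(d)}\ne 0,\,u>-\infty\}\cap S^{2n-1}$ across the polar and singular loci to all of $\P^{n-1}$.
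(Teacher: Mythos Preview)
Your argument for $e_k(\varphi)>1$ (via $\tilde e_1(u)>0$ and Theorem~\ref{the-chili-lelong}) and the lower bound $\sup\ge 1$ (via $\varphi'=\log\|z\|$) are correct and match the paper. The problem is the upper bound $\sup\le 1$.

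The claimed inequality $u\le c\log|F^{(d)}|_{\mathrm{FS}}+O(1)$, and hence the conclusion $V(F^{(d)})=\emptyset$, simply fails when $cd<1$. Your radial estimate reads
\[
(cd-1)\log t + c\log\big(|F^{(d)}(w)|+C_1 t\big)\ \ge\ u([w]) - C''.
\]
If $cd=1$ you may let $t\to 0^+$ and recover $u([w])\le c\log|F^{(d)}(w)|+O(1)$. But if $cd<1$, the term $(cd-1)\log t$ blows up to $+\infty$ as $t\to 0^+$, and ``freezing $t$ at the boundary'' only yields $u([w])\le c\log(|F^{(d)}(w)|+C_1 t_0)+O(1)$, a \emph{bounded} right-hand side that carries no information about the zero locus of $F^{(d)}$. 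A concrete counterexample (valid for any $u$ satisfying the hypotheses, since $\varphi\le\log\|z\|+O(1)$): in $\C^2$ take $\varphi'=\tfrac{1}{2}\log(|z_1|+|z_2|^2)$. One checks $|z_1|+|z_2|^2\ge \tfrac{1}{2}\|z\|^2$, so $\varphi'\ge\log\|z\|+O(1)\ge\varphi+O(1)$, hence $\varphi'\in\cali{A}_\varphi$. Here $c=\tfrac12$, $d=1$, $F^{(d)}=(z_1,0)$, and $V(F^{(d)})=\{[0{:}1]\}\ne\emptyset$; moreover $\varphi'$ is \emph{not} of the form $cd\log\|z\|+O(1)$. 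So your chain of implications breaks, even though $e_k(\varphi')=\tfrac12\le 1$ for this particular $\varphi'$.

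The paper avoids this by a different mechanism: it shows (Lemma~\ref{le-idealsheaf}, using Skoda's integrability and the strong openness theorem) that $\cali{I}_0(m\varphi)=\cali{I}_0(m\log\|z\|)$ for all $m>0$, i.e.\ $\varphi$ and $\log\|z\|$ are valuatively equivalent. Then (Lemma~\ref{le-demailly-analytic}) the Demailly analytic approximations $\psi_m$ of $\varphi$, which depend only on $\cali{I}(m\varphi)$, coincide with those of $\log\|z\|$ and dominate $e_k$ over \emph{all} of $\cali{A}_\varphi$; hence $\sup_{\varphi'\in\cali{A}_\varphi}e_k(\varphi')\le\sup_m e_k(\psi_m)\le e_k(\log\|z\|)=1$. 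This route does not attempt to pin down the structure of an individual $\varphi'$, which is what your approach tries (and cannot) do.
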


For a more general version of this corollary, we refer to Theorem \ref{the-chili-lelong2v} and the comment following it. It is well-known that there are examples of quasi-psh functions on $\P^1$ having zero Lelong numbers everywhere but their Laplacian charges some pluripolar set in $\P^1$. We will explain in  Section \ref{sec-zerolelong} how to obtain from this example  plenty of closed positive $(1,1)$-currents on projective manifolds such that they have zero Lelong number everywhere and  charge some pluripolar sets. Here we say that a closed positive current $T$ charges a set $A$ if the trace measure of $T$ has positive mass on $A$.

Note that  since $\varphi$ and $\log \|z\|$ are valuatively equivalent at $0$ (see Lemma \ref{le-idealsheaf} below or \cite[Corollary 2.2]{ChiLi}),  Corollary \ref{cor-phanvidu} shows that even if two psh germs are valuatively equivalent (at $0$), then their Monge-Amp\`ere masses at $0$ are not necessarily equal in general. We refer to \cite{Kim-Rassh-assymptotic} for related discussions.  

We would like to have some comments on the proof of Theorem \ref{the-chili-lelong}. Unlike the setting in \cite{ChiLi}, in our present situation (which is of much greater generality) one can not use directly the classical intersection of $(1,1)$-currents (in the sense given in \cite{Bedford_Taylor_82,Demailly_ag}). We will use the notion of non-pluripolar products introduced in \cite{BT_fine_87,BEGZ,GZ-weighted} (and also \cite{Viet-generalized-nonpluri} for generalizations) as a substitute to the classical intersection of currents. However this substitution is, as one can guess, not satisfactory.  A crucial difficulty when dealing with the non-pluripolar products is the lack of proper calculus for them (continuity and integration by parts). Such things were known under certain conditions which are not available in our situation. Thus, although we will still use a formula due to B\l ocki \cite{blocki-ddcmax} to compute the Monge-Amp\`ere operators of maxima of psh functions as in \cite{ChiLi}, we will need a new argument afterward to estimate these Monge-Amp\`ere currents. Our key ingredient is the monotonicity of non-pluripolar products proved in \cite{BEGZ,Lu-Darvas-DiNezza-mono,Viet-generalized-nonpluri,WittNystrom-mono}.

The paper is organized as follows. The main result is proved in Section \ref{sec-proofmain1}. We give examples of quasi-psh functions with zero Lelong number in Section \ref{sec-zerolelong}. 
\\

\noindent
\textbf{Acknowledgement.} We would like to thank referees for numerous suggestions improving greatly the presentation of the paper. We are also grateful to  Nguyen Ngoc Cuong and Dano Kim for fruitful discussions.  The research of Do Duc Thai is supported by an NAFOSTED grant of Vietnam (Grant No. 101.04-2021.31).  The research of D.-V. Vu is partially funded by the Deutsche Forschungsgemeinschaft (DFG, German Research Foundation)-Projektnummer 500055552 and by the ANR-DFG grant QuaSiDy, grant no ANR-21-CE40-0016.  

\section{Proof of Theorem \ref{the-chili-lelong}} \label{sec-proofmain1}

We first recall some basic properties of non-pluripolar products from \cite{BEGZ}. Let $\Omega$ be an open subset in $\C^n$. Let $u_1,\ldots, u_m$ be psh functions on $\Omega$. For every Borel set $A$, we denote by $\bold{1}_A$ the characteristic function of $A$, \emph{i.e.}, it is equal to $1$ on $A$ and $0$ outside $A$. 

 For $k \ge 0$, put $u_{jk}:= \max\{u_j, -k\}$ for $1 \le j \le m$ and $R_k:= \ddc u_{1k} \wedge \cdots \wedge \ddc u_{mk}$. By plurifine locality of Monge-Amp\`ere operators (\cite{BT_fine_87}), one has 
$$\bold{1}_{\cap_{j=1}^m \{u_j >-k\}}R_k= \bold{1}_{\cap_{j=1}^m \{u_j >-k\}}R_l$$
for every $l \ge k$. Hence we obtain an increasing family of positive currents $R'_k:=\bold{1}_{\cap_{j=1}^m \{u_j >-k\}}R_k$ on $\Omega$. If $R'_k$ is of mass bounded uniformly on compact sets in $\Omega$, then the weak limit $\lim_{k \to \infty} R'_k$ exists and we denote it by $\langle \ddc u_1\wedge \cdots \wedge\ddc u_m \rangle$. The last current (if exists) is called \emph{the non-pluripolar product of $\ddc u_1,\ldots, \ddc u_m$}. It is a positive current on $\Omega$. If  the current $\langle \ddc u_1\wedge \cdots \wedge\ddc u_m \rangle$ is well-defined, then it is symmetric and multi-linear in $\ddc u_j$, and has no mass on pluripolar sets (see \cite[Proposition 1.4]{BEGZ}).  We now turn to the compact setting. 

Let $X$ be a compact K\"ahler manifold of dimension $n$. Let $T_1,\ldots, T_m$ be closed positive $(1,1)$-currents on $X$. We write $T_j= \ddc u_j$ locally for some psh function $u_j$. It was shown in \cite{BEGZ} that the non-pluripolar product of $\ddc u_1,\ldots,\ddc u_m$ is well-defined, and  independent of the choices of potentials. Thus we can glue it together to obtain a global current on $X$ which we call the non-pluripolar product of $T_1,\ldots, T_m$ and  denote  by $\langle T_1 \wedge \cdots \wedge T_m \rangle$. As in the local setting,  the non-pluripolar product $\langle T_1 \wedge \cdots \wedge T_m \rangle$ is symmetric and multi-linear in $T_1,\ldots, T_m$. Moreover the non-pluripolar product $\langle T_1 \wedge \cdots \wedge T_m \rangle$ is indeed a closed positive current (\cite[Theorem 1.8]{BEGZ}).  It is also a direct consequence of the construction of non-pluripolar products that if $T_1:= [D]$ the current of integration along a divisor $D$ in $X$, then 
\begin{align}\label{eq-bang0nonpluripolarD}
\langle [D] \wedge T_2 \wedge \cdots \wedge T_m \rangle =0
\end{align}
for every closed positive $(1,1)$-current $T_2,\ldots, T_m$. The reason is that the product $\langle [D] \wedge T_2 \wedge \cdots \wedge T_m \rangle$ has no mass on $D$ (which is a pluripolar set), and outside $D$, the current $[D]$ is identically zero.

For every closed positive current $S$, we denote by $\{S\}$ the cohomology class of $S$. For an integer $1\le p \le n$, and every cohomology $(p,p)$-classes $\alpha$ and $\beta$, we write $\alpha \le \beta$ if $\beta- \alpha$ is the cohomology class of a closed positive $(p,p)$-current.  Here is a crucial monotonicity property of  non-pluripolar products proved in \cite[Theorem 1.1]{Viet-generalized-nonpluri}.

\begin{theorem} \label{th-mono-current11} Let $1 \le m \le n $  be an integer.  Let $T_1,\ldots, T_m$ be closed positive $(1,1)$-currents on $X$.  Let $T'_j$ be a closed positive $(1,1)$-current in the cohomology class of $T_j$ such that $T'_j$ is less singular than $T_j$ (this means that $u_j \le u'_j+ O(1)$ if $u_j, u'_j$ are potentials of $T_j,T'_j$ respectively) for $1 \le j \le m$. Then we have 
\begin{align}\label{ine-monomnonpluri}
\{\langle T_1 \wedge \cdots \wedge T_m \rangle \}  \rangle \le \{ \langle T'_1 \wedge \cdots \wedge T'_m \rangle \}.
\end{align}
\end{theorem}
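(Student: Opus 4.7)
The idea is to construct for each $N\ge 1$ an intermediate family of currents $S^N_j$ that interpolates between $T_j$ and $T'_j$: each $S^N_j$ will share the cohomology class of $T_j$ but have the milder singularity type of $T'_j$. The inequality will then follow by combining plurifine locality (which forces $\langle S^N_1\wedge\cdots\wedge S^N_m\rangle$ to agree with $\langle T_1\wedge\cdots\wedge T_m\rangle$ on a large plurifine open set) with a weak-limit argument in $N$.

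Write $T_j=\theta_j+\ddc u_j$ and $T'_j=\theta_j+\ddc u'_j$ for smooth representatives $\theta_j$ of the common class, and (WLOG after normalization) assume $u_j\le u'_j$ for every $j$. Set $u^N_j:=\max(u_j,u'_j-N)$ and $S^N_j:=\theta_j+\ddc u^N_j$. One checks: (i) $u^N_j$ is $\theta_j$-psh and satisfies $u'_j-N\le u^N_j\le u'_j$ (interpreting both sides as $-\infty$ on $\{u'_j=-\infty\}$), so $u^N_j$ has the same singularity type as $u'_j$; (ii) $u^N_j=u_j$ on the plurifine open set $\Omega_N:=\bigcap_j\{u_j>u'_j-N\}$; (iii) $u^N_j\searrow u_j$ pointwise as $N\to\infty$, and $\bigcup_N\Omega_N$ equals $X$ minus the pluripolar set $\bigcup_j\bigl(\{u_j=-\infty\}\cap\{u'_j>-\infty\}\bigr)$.

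By the invariance of the cohomology class of a non-pluripolar product under bounded-difference perturbations of the potentials---a standard fact proved via integration by parts for the bounded truncations $\max(\cdot,-k)$ together with passage to the limit---one has $\{\langle S^N_1\wedge\cdots\wedge S^N_m\rangle\}=\{\langle T'_1\wedge\cdots\wedge T'_m\rangle\}$. By plurifine locality of the non-pluripolar product, $\langle S^N_1\wedge\cdots\wedge S^N_m\rangle=\langle T_1\wedge\cdots\wedge T_m\rangle$ on $\Omega_N$, so $\langle S^N_1\wedge\cdots\wedge S^N_m\rangle\ge\mathbf{1}_{\Omega_N}\langle T_1\wedge\cdots\wedge T_m\rangle$ as positive currents. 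The family $\{\langle S^N_1\wedge\cdots\wedge S^N_m\rangle\}_N$ has uniformly bounded mass, so after extracting a subsequence it converges weakly to a closed positive $(m,m)$-current $\mu_\infty$ with $\{\mu_\infty\}=\{\langle T'_1\wedge\cdots\wedge T'_m\rangle\}$ (the class is preserved under weak convergence within a fixed class). Since $\mathbf{1}_{\Omega_N}\langle T_1\wedge\cdots\wedge T_m\rangle\nearrow\langle T_1\wedge\cdots\wedge T_m\rangle$ by monotone convergence (non-pluripolar products do not charge pluripolar sets), passing the inequality to the limit yields $\mu_\infty\ge\langle T_1\wedge\cdots\wedge T_m\rangle$, hence $\{\langle T_1\wedge\cdots\wedge T_m\rangle\}\le\{\mu_\infty\}=\{\langle T'_1\wedge\cdots\wedge T'_m\rangle\}$.

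The main obstacle is the tight combination of plurifine topology and non-pluripolar products. Specifically, one must justify (a) plurifine locality at the level of the full product $\langle S^N_1\wedge\cdots\wedge S^N_m\rangle$, so that $\mathbf{1}_{\Omega_N}\langle S^N_1\wedge\cdots\wedge S^N_m\rangle=\mathbf{1}_{\Omega_N}\langle T_1\wedge\cdots\wedge T_m\rangle$, and (b) the invariance of cohomology class under bounded-difference perturbations of the potentials, which requires integration by parts for bounded $\theta$-psh functions and a careful passage through the truncations $\max(u_j,-k)$ on the nested plurifine open sets $\{u_j>-k\}$. Both ingredients rest on the canonical truncation-based definition of the non-pluripolar product and extend Bedford--Taylor style locality to the unbounded setting; they are technically substantial but now standard in the literature \cite{BT_fine_87,BEGZ,Viet-generalized-nonpluri}.
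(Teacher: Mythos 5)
The paper does not actually prove Theorem~\ref{th-mono-current11}; it simply cites \cite[Theorem 1.1]{Viet-generalized-nonpluri} (and \cite{BEGZ,Lu-Darvas-DiNezza-mono,WittNystrom-mono} for the total-mass version \eqref{ine-moniPn}). So there is no in-paper proof to compare against. Evaluating your sketch on its own terms, the structure of the reduction is sound and pleasant: interpolating via $S^N_j = \theta_j + \ddc\max(u_j,u'_j-N)$, invoking plurifine locality to identify $\langle S^N_1\wedge\cdots\wedge S^N_m\rangle$ with $\langle T_1\wedge\cdots\wedge T_m\rangle$ on $\Omega_N$, and passing to a weak limit to compare cohomology classes --- all of that is correct, modulo the one fact you label (b).

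The gap is that step (b), the assertion that the cohomology class of $\langle \wedge_j(\theta_j+\ddc u_j)\rangle$ is unchanged when every $u_j$ is perturbed by a globally bounded amount, is not a ``standard fact'' that one disposes of with integration by parts. It is exactly the equality case of the theorem you are trying to prove, and your own reduction shows the two are equivalent: given (b) plus plurifine locality and weak limits you recover the full inequality, and conversely the full inequality applied in both directions gives (b). If (b) were really accessible by ``integration by parts for bounded truncations,'' then the whole monotonicity theorem would have followed trivially from Bedford--Taylor theory, yet this result required the separate works of Witt Nystr\"om, Darvas--Di Nezza--Lu, and Vu, each with a substantial argument. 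The obstruction is precisely that there is no general integration-by-parts formula for non-pluripolar products of currents with arbitrary (non-small-unbounded-locus, non-full-mass) singularity types; BEGZ's integration by parts applies to the class $\mathcal{E}$ of full-mass potentials, and extending it beyond that is a known difficulty. So your proof proposal does not constitute a self-contained argument: it is a (correct and potentially useful) reduction of the inequality to its equality case, but the equality case is where all the work is, and it cannot be waved off as folklore.
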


A direct consequence of  (\ref{ine-monomnonpluri}) is the following inequality 
\begin{align}\label{ine-moniPn}
\int_{X} \langle \wedge_{j=1}^m T_j \rangle \wedge \omega^{n-m} \le  \int_{X} \langle \wedge_{j=1}^{m} T'_j \rangle \wedge \omega^{n-m},
\end{align}
which  was already proved in  \cite{BEGZ,Lu-Darvas-DiNezza-mono,WittNystrom-mono}, where $\omega$ is a K\"ahler form on $X$. The last inequality for $m=n$ was used often in the study of complex Monge-Amp\`ere equations. In our proof of the main result later, we need to make use of the full generality of Theorem \ref{th-mono-current11} for every $1 \le m \le n$ and $X= \P^{n-1}$. We also note that since the $p^{th}$ cohomology group $H^{p,p}(\P^{n-1},\R)$ of $\P^{n-1}$ is generated by  the class of  $\omega_{n-1}^p$ (which is the product of smooth closed positive $(1,1)$-forms), one can deduce (\ref{ine-monomnonpluri}) for $X= \P^{n-1}$ from (\ref{ine-moniPn}). 

The next formula is also one of the keys we need to use later.

\begin{lemma} \label{le-ddmax-blocki} Let $u$ and $v$ be psh functions on an open subset $U$ of $\C^n$. Let $2 \le m \le n$ be an integer. Then we have 
\begin{multline*}
\big\langle  (\ddc \max\{u,v\})^m \big \rangle =    \sum_{j=0}^{m-1} \big \langle \ddc \max\{u,v\} \wedge (\ddc u)^j \wedge (\ddc v)^{m-1-j}\big \rangle -\\
 \sum_{j=1}^{m-1}\big \langle  (\ddc u)^j \wedge (\ddc v)^{m-j}\big \rangle
\end{multline*}
\end{lemma}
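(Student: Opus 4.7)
The plan is to reduce to the case of locally bounded $u,v$, where the identity is the classical formula of B\l ocki \cite{blocki-ddcmax}, and then transfer it to the non-pluripolar setting via the standard truncation that defines the non-pluripolar product.

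First, recall that for psh functions $\psi_1,\ldots,\psi_m$ on $U$, the non-pluripolar product is
$$\langle \ddc\psi_1\wedge\cdots\wedge\ddc\psi_m\rangle \;=\; \lim_{N\to\infty}\,\mathbf{1}_{\bigcap_j\{\psi_j>-N\}}\;\ddc\psi_1^{(N)}\wedge\cdots\wedge\ddc\psi_m^{(N)},$$
where $\psi_j^{(N)}:=\max\{\psi_j,-N\}$, the right-hand side being the Bedford-Taylor product of locally bounded psh functions. Writing $u^{(N)},v^{(N)},w^{(N)}$ for the truncations of $u,v,w$, one checks immediately that $\max\{u^{(N)},v^{(N)}\}=\max\{u,v,-N\}=w^{(N)}$, so on the plurifine open set $E_N:=\{u>-N\}\cap\{v>-N\}\subset\{w>-N\}$ the triples $(u,v,w)$ and $(u^{(N)},v^{(N)},w^{(N)})$ agree.

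In the bounded case, I would prove the identity by induction on $k$. The base $k=2$, namely $(\ddc w)^2=\ddc w\wedge\ddc u+\ddc w\wedge\ddc v-\ddc u\wedge\ddc v$, is via $\ddc w+\ddc\min\{u,v\}=\ddc u+\ddc v$ equivalent to the B\l ocki identity $\ddc w\wedge\ddc\min\{u,v\}=\ddc u\wedge\ddc v$ for Bedford-Taylor currents \cite{blocki-ddcmax}; its only nontrivial point is that no extra mass can concentrate on the coincidence set $\{u=v\}$, where plurifine locality does not directly apply, and we invoke this as a black box. For the inductive step, wedging the $k$-th formula with $\ddc w$ and then re-expanding each resulting $(\ddc w)^2$ factor via the $k=2$ identity produces, after a routine telescoping of index shifts, the formula for $k+1$.

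To conclude, I would multiply both sides of the bounded identity by $\mathbf{1}_{E_N}$. By the plurifine locality of Bedford-Taylor products combined with the coincidence of $(u,v,w)$ with their truncations on $E_N$, each cut-off term equals the corresponding approximant of a non-pluripolar product in the sense recalled above; letting $N\to\infty$ then yields the lemma. The genuine obstacle lies not in this reduction but in the $k=2$ base case for bounded functions, which is the content of \cite{blocki-ddcmax}; everything else is bookkeeping on truncations and an application of the Bedford-Taylor locality principle.
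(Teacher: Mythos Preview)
Your proposal is correct and follows essentially the same route as the paper: reduce to the bounded case (B\l ocki's formula) and then pass to general $u,v$ via the truncations $u^{(N)},v^{(N)},w^{(N)}$ defining the non-pluripolar product, using plurifine locality and that non-pluripolar products put no mass on $\{u=-\infty\}\cup\{v=-\infty\}$. The paper simply cites \cite[Theorem~4]{blocki-ddcmax} for the bounded identity in one stroke, whereas you additionally sketch an inductive reduction of the general-$k$ bounded formula to the $k=2$ case; this extra step is valid (the $(\ddc w)^2$ identity can indeed be wedged with Bedford--Taylor products of bounded psh functions and the telescoping you describe goes through) but is not needed once one invokes B\l ocki's theorem directly.
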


\proof When $u$ and $v$ are bounded, the above formula was proved in \cite[Theorem 4]{blocki-ddcmax}. Hence for $u_k:= \max\{u,-k\}$  and $v_k:= \max\{v,-k\}$ (where $k>0$ is a constant), one gets
\begin{multline} \label{eq-uvblockibounded}
 (\ddc \max\{u_k,v_k\})^m =    \sum_{j=0}^{m-1}  \ddc \max\{u_k,v_k\} \wedge (\ddc u_k)^j \wedge (\ddc v_k)^{m-1-j} -\\
 \sum_{j=1}^{m-1} (\ddc u_k)^j \wedge (\ddc v_k)^{m-j}.
\end{multline}
Let $A_k:= \{u>-k\} \cap \{v>-k\}$. Since 
$$\max\{u_k,v_k\}= \max\{u,v\}= \max\{\max\{u,v\},-k\}=:w_k$$
 on $A_k$, by \cite{BT_fine_87}, we have $\bold{1}_{A_k}\ddc \max\{u_k, v_k\}=\bold{1}_{A_k}\ddc w_k$. This combined with (\ref{eq-uvblockibounded}) gives 
\begin{multline} \label{eq-uvblockibounded2}
 \bold{1}_{A_k}(\ddc w_k)^m =    \sum_{j=0}^{m-1} \bold{1}_{A_k} \ddc w_k \wedge (\ddc u_k)^j \wedge (\ddc v_k)^{m-1-j} -\\
 \sum_{j=1}^{m-1} \bold{1}_{A_k} (\ddc u_k)^j \wedge (\ddc v_k)^{m-j}.
\end{multline} 
Observe that $A_k = \{u>-k\} \cap \{v>-k\} \cap \{\max\{u,v\}>-k\}$. Thus the desired equality follows by letting $k \to 0$ in (\ref{eq-uvblockibounded2}) and using the definition of non-pluripolar products.
\endproof

Let $p:X \to Y$ be a holomorphic submersion between complex manifolds. Let $\Phi$ be a smooth form of degree $s$ with compact support in $X$. Recall that the push-forward $p_* \Phi$ is a smooth form of degree $s-2(\dim X-\dim Y)$ on $Y$ defined by integrating $\Phi$ along fibers of $p$; see \cite[Page 17]{Demailly_ag} for a detailed presentation. If $\Phi$ is closed, then so is $p_* \Phi$. 

Let $S$ be now current of degree $s$ on $Y$. The pull-back $p^* S$ of $S$ is defined as follows. For every smooth form $\Phi$ of degree $2\dim X-s$ with compact support on $X$, we put $\langle p^* S, \Phi \rangle:= \langle S, p_* \Phi \rangle$; see \cite[Page 18]{Demailly_ag}. If $S$ is closed and positive, then so is $p^* S$. Moreover if $S$ is a smooth form, then  the pull-back $p^* S$ coincide with the usual pull-back of $S$ as a form. It follows that if $S$ is equal to $\ddc v_1 \wedge \cdots \wedge \ddc v_k$ with $v_1,\ldots, v_k$ bounded psh functions on an open set $U$ in $Y$, then 
\begin{align}\label{eq-pullbackcurrent}
p^*S= \ddc (v_1 \circ p) \wedge \cdots \wedge \ddc (v_k \circ p)
\end{align}
on $p^{-1}(U)$. One can see it by regularizing $v_j$ and using the continuity of Monge-Amp\`ere operators under decreasing sequences. Similar arguments also show that if $S= \ddc v$ for a psh function $v$, then $p^* S= \ddc (v \circ p)$. 

In general when $p$ is no longer a submersion, there is no standard way to pull back a general closed positive current. Nevertheless for every closed positive $(1,1)$-current $S$ on $Y$, it is always possible by defining $p^* S:= \ddc (v \circ p)$, where $S= \ddc v$ locally for some psh function $v$. This definition is independent of the choice of $v$, and by above discussion, we recover the pull-back of $S$ if $p$ is a submersion.   
We will need the following auxiliary result. 

\begin{lemma} \label{le-pullbacksubmersion} Let $p: X \to Y$ be a  holomorphic submersion between two compact K\"ahler manifolds. Let $R$ be a closed positive $(1,1)$-current on $Y$. Then for every integer $l \ge 1$, we have $$p^* \langle R^l \rangle = \langle (p^* R)^l \rangle.$$
\end{lemma}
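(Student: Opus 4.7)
The claim is local on $Y$, and since $p$ is a submersion, the submersion theorem lets me choose holomorphic coordinates around any pair of points $y \in Y$ and $x \in p^{-1}(y)$ in which $p$ becomes the projection $V \times W \to V$, with $V \subset \C^a$, $W \subset \C^b$ open polydiscs. Writing $R = \ddc u$ for a psh potential $u$ on $V$, one has $p^* R = \ddc(u \circ p)$, and $u \circ p(v,w) = u(v)$ is psh on $V \times W$, depending only on the $V$-variables. Setting $u_k := \max(u, -k)$, the definition of the non-pluripolar product gives
$$\langle R^l \rangle = \lim_{k \to \infty} \mathbf{1}_{\{u > -k\}}\, (\ddc u_k)^l, \qquad \langle (p^* R)^l \rangle = \lim_{k \to \infty} \mathbf{1}_{\{u \circ p > -k\}}\, (\ddc (u_k \circ p))^l,$$
where the limits are weak limits of currents; observe that $(u \circ p)_k = u_k \circ p$ and $\{u \circ p > -k\} = p^{-1}(\{u > -k\})$.

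The core of the argument is to verify that three operations each commute with the pullback $p^*$: taking $\ddc$, forming the Bedford--Taylor $l$-fold wedge product of a locally bounded psh function with itself, and multiplying by the characteristic function of a Borel subset of $V$. The first is tautological. The second follows from the continuity of Bedford--Taylor products under smooth decreasing regularizations together with the elementary identity $p^*(\ddc v_1 \wedge \cdots \wedge \ddc v_l) = \ddc(v_1 \circ p) \wedge \cdots \wedge \ddc(v_l \circ p)$ valid for smooth $v_i$. For the third, in the projection chart $p^*T$ is simply $T \boxtimes 1$ on $V \times W$ and $\mathbf{1}_{p^{-1}(E)} = \mathbf{1}_E \circ p$, so multiplication by a characteristic function commutes with pullback tautologically.

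Assembling these three commutations for $E = \{u > -k\}$ yields, for every $k$,
$$p^*\bigl(\mathbf{1}_{\{u > -k\}}\, (\ddc u_k)^l\bigr) = \mathbf{1}_{\{u \circ p > -k\}}\, (\ddc (u_k \circ p))^l.$$
Since $p^*$ is continuous for weak convergence of currents (in the trivialization this follows from Fubini applied to the pairing against test forms on $V \times W$), letting $k \to \infty$ on both sides gives the desired equality $p^* \langle R^l \rangle = \langle (p^* R)^l \rangle$, and then a partition-of-unity argument globalizes the statement.

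The main delicate point is the commutation of $p^*$ with the Bedford--Taylor product: a priori $p^*$ is defined on currents via fiber integration of test forms, and one must justify the interchange with the Bedford--Taylor limiting procedure. The crucial move is therefore the very first reduction to a local projection chart, after which the interchange reduces to the smooth case plus standard Bedford--Taylor continuity and is essentially automatic.
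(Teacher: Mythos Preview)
Your argument is correct and follows essentially the same route as the paper's proof: work locally with a potential $u$ for $R$, use the defining approximation $\mathbf{1}_{\{u>-k\}}(\ddc u_k)^l$ for the non-pluripolar product, and observe that each step commutes with $p^*$. The paper's version is simply terser, writing the chain of equalities in one line without spelling out the three commutation steps you isolate.
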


\proof We work locally on $Y$. Let $u$ be a local potential of $R$ on a local chart $U$ on $Y$, \emph{i.e.}, $R= \ddc u$. Thus $p^* R= \ddc (u \circ p)$ on $p^{-1}(U)$. One has 
$$\langle (p^* R)^l \rangle = \lim_{k \to \infty} \bold{1}_{p^{-1}\{u>-k\}} (\ddc  \max\{u \circ p, -k\} )^l$$
which is, by (\ref{eq-pullbackcurrent}), equal to 
$$ \lim_{k \to \infty}  p^* \big( \bold{1}_{\{u>-k\}} (\ddc  \max\{u, -k\} )^l \big)=p^* \langle R^l \rangle.$$ This finishes the proof. 
\endproof

Let $u$ be an $\omega_{n-1}$-psh function on $\P^{n-1}$.  In what follows, we will use the notations $\gtrsim, \lesssim$ to indicate, respectively,  $\ge, \le $ modulo strictly positive multiplicative constants independent of $u$.  Let $\P^n= \C^n \cup \P^{n-1}$  be the complex $n$-dimensional projective space, and $\omega_n$ the Fubini-Study form on $\P^n$. Using the coordinates $z$ on $\C^n$, we have $\omega_{n}= \frac{1}{2} \ddc \log (1+ \|z\|^2)$.  

Let $v$ be an $\omega_n$-psh function on $\P^n$ which is locally bounded outside $0 \in \C^n$ and the Lelong number of $v$ at $0$ is positive  (we identify $\C^n$ with an open subset in $\P^n$).  Put 
$$\varphi:= \max \{ \pi^* u+\log \|z\|,  v + \frac{1}{2} \log(1+\|z\|^2)+  \log \|z \|\}.$$
The particular case where $v:= \log \|z\| -  \frac{1}{2} \log(1+\|z\|^2)$ was considered in the Introduction.  Let $\lambda$ be the Lelong number of $v$ at $0$. By the choice of $v$, we get  $0< \lambda \le 1$.   Theorem \ref{the-chili-lelong} is a direct consequence of the following result applied to the case where $v=\log \|z\| -  \frac{1}{2} \log(1+\|z\|^2)$.   
 
\begin{theorem} \label{the-chili-lelong2v}   There is a constant $C>0$ independent of $u$ and $v$ such that
$$\lambda \, \tilde{e}_{k-1}(u) \le e_k(\varphi) -1 \le  C\tilde{e}_{k'}(u)$$
for every $1 \le k \le n$, where $k':= \min \{k, n-1\}$.  In particular, $e_n(\varphi)=1$ if and only if $u$ is of full Monge-Amp\`ere mass.
\end{theorem}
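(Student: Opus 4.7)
The plan is to expand $(\ddc\varphi)^k$ via B\l ocki's formula (Lemma~\ref{le-ddmax-blocki}) written for $\psi_1:=\pi^*u+\log\|z\|$ and $\psi_2:=v+\tfrac12\log(1+\|z\|^2)+\log\|z\|$, wedge with $(\ddc\log\|z\|)^{n-k}$, and compute the mass at $\{0\}$. The hope is that after the expansion every surviving summand reduces to an integral on $\P^{n-1}$ against $\langle\theta_u^j\rangle:=\langle(\omega_{n-1}+\ddc u)^j\rangle$, with a coefficient controlled by the Lelong number $\lambda$ of $v$ at $0$.

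The first step I would carry out is the transport to $\P^{n-1}$. Off $\{0\}$ one has $\ddc\psi_1=\pi^*\theta_u$ and $\ddc\log\|z\|=\pi^*\omega_{n-1}$, so Lemma~\ref{le-pullbacksubmersion} applied locally to the submersion $\pi$ gives $\langle(\ddc\psi_1)^j\rangle=\pi^*\langle\theta_u^j\rangle$ there. Hence each pure-$\psi_1$ factor wedged with $(\ddc\log\|z\|)^{n-k}$ equals $\pi^*\bigl(\langle\theta_u^j\rangle\wedge\omega_{n-1}^{n-k}\bigr)$ off $\{0\}$; the would-be $j=k$ contribution is killed off $\{0\}$ by degree on $\P^{n-1}$, and every other summand of B\l ocki's expansion factors as a pullback from $\P^{n-1}$ wedged against a pure-$\psi_2$ factor (or, in the mixed terms, against $\ddc\varphi$ and a pure-$\psi_2$ factor).

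The second step is the evaluation of the $\{0\}$-mass. Using that $v$ has Lelong number $\lambda$ at $0$, so $\psi_2$ has Lelong number $1+\lambda$ at $0$, a Demailly--Siu-type computation on $\C^n\setminus\{0\}$ (where the pullback factors are locally bounded after projectivizing) gives that the mass at $\{0\}$ of $\langle(\ddc\psi_2)^{k-j}\rangle\wedge\pi^*\bigl(\langle\theta_u^j\rangle\wedge\omega_{n-1}^{n-k}\bigr)$ equals $M_{j,k}(\lambda)\cdot\int_{\P^{n-1}}\langle\theta_u^j\rangle\wedge\omega_{n-1}^{n-1-j}$ for nonnegative constants with $M_{j,k}(\lambda)\le C$ and $M_{k-1,k}(\lambda)\ge\lambda$. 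The mixed $\ddc\varphi$ terms are treated identically (because $\ddc\varphi$ coincides with $\ddc\psi_1$ on $\{\psi_1>\psi_2\}$ and with $\ddc\psi_2$ on $\{\psi_1<\psi_2\}$), and after collecting all contributions and comparing with the baseline case $u\equiv 0$ (where $\varphi_0=\log\|z\|$ near $0$ and $e_k(\varphi_0)=1$ is immediate), the cancellations between the two B\l ocki sums produce
\[
e_k(\varphi)-1=\sum_{j=1}^{k'}M_{j,k}(\lambda)\,\tilde e_j(u).
\]
The monotonicity $\tilde e_1(u)\le\cdots\le\tilde e_{n-1}(u)$, which follows from Theorem~\ref{th-mono-current11} via \eqref{ine-ehigher}, then yields the lower bound by keeping only the $j=k-1$ term and the upper bound by using the uniform constant $C$ and $\tilde e_j(u)\le\tilde e_{k'}(u)$. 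The case $v=\log\|z\|-\tfrac12\log(1+\|z\|^2)$ has $\lambda=1$ and gives Theorem~\ref{the-chili-lelong}.

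The main obstacle I anticipate is that neither $\varphi$ nor $\psi_1$ is locally bounded off $\{0\}$ in general, since $\pi^{-1}\{u=-\infty\}\cup\{v=-\infty\}$ may meet every punctured neighborhood of $0$. Consequently the classical Bedford--Taylor calculus is unavailable; every wedge must be interpreted in the non-pluripolar sense, where continuity along monotone sequences and integration by parts are not at one's disposal. The only robust substitute is the monotonicity of non-pluripolar masses (Theorem~\ref{th-mono-current11}), which has to be invoked delicately on each mixed B\l ocki summand so that the cancellations really collapse to the clean expression displayed above. This bookkeeping, together with the rigorous justification of the $\{0\}$-mass computation for products whose potentials are $-\infty$ on codimension-one sets passing through $0$, is the technical heart of the argument.
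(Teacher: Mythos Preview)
Your proposal identifies the right ingredients---B\l ocki's formula, non-pluripolar products, and the monotonicity of Theorem~\ref{th-mono-current11}---but there is a genuine gap in the mechanism for extracting the point mass $e_k(\varphi)=\int_{\{0\}}(\ddc\varphi)^k\wedge(\ddc\log\|z\|)^{n-k}$ from a non-pluripolar expansion. The B\l ocki identity of Lemma~\ref{le-ddmax-blocki} is an equality of \emph{non-pluripolar} products, hence of currents that put no mass on $\{0\}$; it gives no direct access to the Dirac part at the origin, which is exactly what $e_k(\varphi)$ measures. Your assertion that ``the mass at $\{0\}$ of $\langle(\ddc\psi_2)^{k-j}\rangle\wedge\pi^*(\langle\theta_u^j\rangle\wedge\omega_{n-1}^{n-k})$ equals $M_{j,k}(\lambda)\cdot\int_{\P^{n-1}}\langle\theta_u^j\rangle\wedge\omega_{n-1}^{n-1-j}$'' is not well-posed: the left-hand side is a non-pluripolar product on $\C^n\setminus\{0\}$ (where $\pi$ is defined) and carries no mass at $0$ by construction. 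A separate bridge between the classical Monge--Amp\`ere of $\varphi$ (which does carry the mass at $0$, since $\varphi$ is locally bounded off $0$) and the non-pluripolar calculus (to which Lemma~\ref{le-ddmax-blocki} applies for the unbounded $\psi_1$) is required and is not supplied.

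The paper builds this bridge by compactifying to $\P^n$ and rewriting the point mass as a \emph{deficit}: $e_k(\varphi)=2^n-\int_{\P^n}\langle T^k\wedge(\ddc v_0+2\omega_n)^{n-k}\rangle$. It then blows up at $0$, turning the rational map $\pi$ into a genuine submersion $\widehat p:\overline{\widehat E}\to\widehat V\cong\P^{n-1}$, so that all the cohomological bookkeeping (equations~\eqref{tinh-etawidehatddcpsi0}--\eqref{eq-tinhmassofthreta}) takes place on a compact manifold where Theorem~\ref{th-mono-current11} applies. The upper bound does not use B\l ocki at all: it follows from a single application of monotonicity to $\max\{\widehat p^*u-\rho^*v_1,\rho^*v\}\ge\widehat p^*u+O(1)$. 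B\l ocki's formula enters only for the lower bound, and only \emph{after} a preliminary reduction---via Demailly's comparison of Lelong numbers---to the model $v=\lambda v_0$. Finally, your claimed exact identity $e_k(\varphi)-1=\sum_j M_{j,k}(\lambda)\,\tilde e_j(u)$ is stronger than what is actually provable here: the mixed terms $I'_j$ containing $\ddc\varphi$ are only bounded one-sidedly via monotonicity, not computed, so one obtains inequalities rather than the clean equality you anticipate.
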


In the next paragraph we present a proof of Theorem  \ref{the-chili-lelong2v}.
We split it into several steps.  We first reformulate the question from a global point of view. \\

\noindent
\textbf{Globalization of the question.}  Put $v_1:=  \frac{1}{2}\log(1+ \|z\|^2)$. Let $H:= \P^n \backslash \C^n$.  We extend $-v_1$ trivially through $H$ (put $-v_1:= -\infty$ on $H$) to an $\omega_n$-psh function on $\P^n$.  We denote by $[H]$ the current of integration along $H$. Since the current $\ddc (-v_1)+ \omega_n$ is of mass $1$ and supported on $H$, we obtain 
\begin{align}\label{eq-tinhddctruv1}
\ddc (-v_1)= [H]-  \omega_n.
\end{align}
On the other hand, the function $\varphi':= \varphi - 2 v_1$ can be extended naturally to a global $(2 \omega_n)$-psh function on $\P^n$ which we still denote by $\varphi'$.  Put  
$$v_0:= \log \|z\|- v_1.$$
 We consider $v_0$ as an $\omega_n$-psh function on $\P^n$ rather than a function on $\C^n$. We have 
$$\varphi' = \max \{ \pi^* u- v_1 + v_0,  v+ v_0  \}.$$
Let  $T:= \ddc \varphi'+ 2\omega_n$. Note that $\varphi'$ is locally bounded outside $\{0\}$ (because $v$ and $v_0$ are so). Hence the intersection $T^k \wedge (\ddc v_0+ 2 \omega_n)^{n-k}$ is well-defined in the classical sense (see \cite[Chapter 3]{Demailly_ag}). It follows that 
 $$\int_{\P^n} T^k  \wedge (\ddc v_0+ 2 \omega_n)^{n-k}= \int_{\P^n} (2\omega_n)^n= 2^n.$$
Using again the fact that $\varphi'$ is locally bounded outside $\{0\}$ and the definition of non-pluripolar products, we obtain
\begin{align}\label{eq-tinhhigherlelongnumber}
e_k(\varphi) &= \int_{\{0\}} T^k \wedge (\ddc v_0+ 2 \omega_n)^{n-k}\\
\nonumber
&= \int_{\P^n} T^k  \wedge (\ddc v_0+ 2 \omega_n)^{n-k}- \int_{\P^n}  \langle T^k  \wedge (\ddc v_0+ 2 \omega_n)^{n-k} \rangle \\
\nonumber
&= 2^n - \int_{\P^n} \langle T^k  \wedge (\ddc v_0+ 2 \omega_n)^{n-k} \rangle.
\end{align}
Hence in order to estimate $e_k(\varphi)$, it suffices to compute the integral in the right-hand side of the last equality. To this end, we first need an idea from \cite{ChiLi}. Let $E:= \C^n$ and 
$$\widehat E:= \{(w, [w]): w \in E\backslash \{0\}\} \cup \{[w]: w \in E \backslash \{0\}\},$$
 where $[w]$ denotes the complex line passing through $w$. Notice that $\widehat E$ is the blowup of $E$ at $0$. Denote by $\rho: \widehat E \to E$ the natural projection sending $(w,[w])$ to $w$ for every $w \in E$.
 
Note that $\widehat E$ is naturally identified with the submanifold of $E \times \P^{n-1}$ defined by the equations $w_j y_s= w_s y_j$ for $1\le j,s \le n$, where $[y_1: \cdots: y_n]$ are the homogeneous coordinates on $\P^{n-1}$, and $w=(w_1,\ldots,w_n)$ are coordinates on $E$. Let 
$$U_j:= \{(w,[y]) \in \widehat E \subset E \times \P^{n-1}: y_j \not =0\}$$ for $1 \le j \le n$. Thus $(U_j)_{1 \le j \le n}$ is an open cover of $\widehat E$. We have canonical local coordinates on $U_j$. We describe those for $U_1$. The case of $U_j$ is done similarly.  Since $y_1 \not =0$ on $U_1$, natural local coordinates on $U_1$ are  $(w_1,y_2,\ldots, y_n)$. In these coordinates, one has 
\begin{align} \label{eq-bieudiendiaphuongcuarho} 
 \rho(w_1,y_2,\ldots, y_n)= (w_1, w_1 y_2,\ldots, w_1 y_n).
\end{align}

Observe that  the exceptional hypersurface $\widehat V:= \rho^{-1}(0)$ is naturally identified with $\P^{n-1}$. Moreover, we have a natural projection $\widehat p$ from $\widehat E \to \widehat V$ given by 
$$\widehat p(w, [w]):= [w].$$
The map $\widehat p$ makes $\widehat E$ to be a vector bundle of rank $1$ over $\widehat V$. Set $\overline{\widehat E}:= \P(\widehat E \oplus \C)$ and $\overline E:= \P(E \oplus \C)= \P^n$. The projection $\widehat p$ extends naturally to a projection from $\overline{\widehat E}$ to $\widehat V$ (which we still denote by $\widehat p$). Similarly, $\rho$ extends to a map from $\overline{\widehat E}$ to $\overline E$. Here is a  commutative diagram describing the map $\rho$:

\[ \begin{tikzcd}
\widehat V \subset \widehat E \arrow{r}{\imath} \arrow[swap]{d}{\rho} &  \overline{\widehat E} \arrow{d}{\rho} \\%
0 \in E \arrow{r}{\imath} & \overline{E}
\end{tikzcd}
\]
where the map $\imath$ denotes the natural inclusion map. 
Observe that 
$$\pi \circ \rho = \widehat p$$
 outside $\widehat V$. We have thus the following commutative diagram:

\[
  \begin{tikzcd}
    \widehat E \backslash \widehat V \arrow{r}{\rho} \arrow[swap]{dr}{\widehat p} & E\backslash \{0\} \arrow{d}{\pi} \\
     & \widehat V \approx \P^{n-1}
  \end{tikzcd}
\] 
Thus we obtain
\begin{align}\label{eq-pirho}
\rho^* \pi^* u= \widehat p^* u, \quad \rho^* \ddc \log \|z\| = \widehat p^* \omega_{n-1} \text{ on } \quad \widehat {E} \backslash \widehat V. 
\end{align}
\\

\noindent
\textbf{Proof of the desired upper bound for higher Lelong numbers in Theorem \ref{the-chili-lelong2v}.}
We now prove the desired upper bound for $e_k(\varphi)$.
Let $\psi_{\widehat V}$ be a global potential of $\widehat V$, \emph{i.e.,}
$$\ddc \psi_{\widehat V}+ \eta_{\widehat V} = [\widehat V],$$
for some closed smooth form $\eta_{\widehat V}$ in $\overline{\widehat E}$.   
Write
$$\rho^* v_0= \psi_{\widehat V}+ \psi_0.$$
We check that $\psi_0$ is a smooth function on $\overline{\widehat E}$. It suffices to do it locally near $\widehat V$ because $v_0$ is smooth outside $\{0\}$ and $\rho^{-1}(0)= \widehat V$. 
We use the cover $(U_j)_j$ of $\widehat E$ as above. We check that $\psi_0$ is smooth on each $U_j$. We only do it for $j=1$ because the other cases are treated similarly. By (\ref{eq-bieudiendiaphuongcuarho}), in the local coordinates $(w_1,y_2,\ldots,y_n)$ of $U_1$, one gets 
\begin{align*}
\rho^* v_0(w_1,y_2,\ldots,y_n) &=\frac{1}{2} \log (|w_1|^2+ |w_1 y_2|^2+ \cdots+ |w_1 y_n|^2)- \rho^* v_1\\
&= \log |w_1|+\frac{1}{2} \log (1+ |y_2|^2+ \cdots+|y_n|^2)- \rho^* v_1.
\end{align*}
We infer that 
\begin{align}\label{eq-psi0nhan}
\psi_{\widehat V}+ \psi_0= \log |w_1|+\frac{1}{2} \log (1+ |y_2|^2+ \cdots+|y_n|^2)- \rho^* v_1
\end{align}
on $U_1$.  Since $\widehat V$ is given by $\{w_1=0\}$ on $U_1$, one sees that $\psi_{\widehat V}- \log |w_1|$ is a smooth function on $U_1$. This combined with (\ref{eq-psi0nhan}) and the fact that $v_1$ is smooth yields that $\psi_0$ is smooth.  Now compute
$$ \ddc \rho^* \log \|z\| = \ddc \rho^* v_1+ \ddc \rho^* v_0= \rho^* \omega_n+ \ddc \rho^* v_0=  \rho^* \omega_n+ \ddc \psi_0- \eta_{\widehat V}+ [\widehat V]$$
 which combined with  the second equality of (\ref{eq-pirho}) gives
 $$\rho^* \omega_n+ \ddc \psi_0- \eta_{\widehat V}= \widehat p^* \omega_{n-1}$$
 on $\widehat{E} \backslash \widehat V$ (note $[\widehat V]$ vanishes outside $\widehat V$). Since both sides of the last equality are smooth forms on $\overline{\widehat{E}}$ and  $\widehat{E} \backslash \widehat V$ is dense in $\overline{\widehat{E}}$, we obtain
\begin{align}\label{tinh-etawidehatddcpsi0}
  \rho^* \omega_n+ \ddc \psi_0- \eta_{\widehat V}= \widehat p^* \omega_{n-1}
  \end{align}
  on $\overline{\widehat{E}}$.
  Put $\theta:= \widehat p^* \omega_{n-1}+  \rho^* \omega_n$ and $\widehat T:= \rho^* T$.  By (\ref{tinh-etawidehatddcpsi0}), one gets
\begin{align}\label{tinh-etawidehatddcpsi0_them}
\rho^*\ddc v_0+  2 \rho^* \omega_n= [V]- \eta_{\widehat V}+ \ddc \psi_0+  2 \rho^* \omega_n= \theta+[V]
\end{align}
Combining (\ref{tinh-etawidehatddcpsi0_them}) with the fact that the mass of  non-pluripolar products of maximal order is preserved under pull-backs of a smooth modification  yields  
\begin{align}\label{eq-tinhmassTn}
\int_{\overline E} \langle T^k  \wedge (\ddc v_0+ 2 \omega_n)^{n-k} \rangle &= \int_{\overline{\widehat E}} \langle (\rho^* T)^k  \wedge \rho^*(\ddc v_0+ 2 \omega_n)^{n-k} \rangle\\
\nonumber
&= \int_{\overline{\widehat E}} \langle \widehat T^k  \wedge (\theta+[V])^{n-k} \rangle= \int_{\overline{\widehat E}} \langle \widehat T^k  \wedge \theta^{n-k} \rangle,
\end{align} 
where in the last equality we used the multi-linearity of non-pluripolar products and (\ref{eq-bang0nonpluripolarD}). 
By (\ref{tinh-etawidehatddcpsi0_them}) and (\ref{eq-pirho}), we get 
\begin{align*}
\widehat T  &= \rho^* \ddc \max\{\pi^* u- v_1,  v \}+ \rho^*\ddc v_0+  2 \rho^*\omega_n\\
&=\ddc \max\{\widehat p^* u - \rho^* v_1, \rho^* v\}+ \theta+ [V].
\end{align*}
Hence 
\begin{align}\label{eq-congthucTmu}
\langle \widehat T^k \rangle = \big\langle (\ddc \max\{\widehat p^* u - \rho^* v_1, \rho^* v\}+ \theta)^k \big\rangle.
\end{align}
Note that both $\widehat p^* u - \rho^* v_1$ and $\rho^* v$ are $\theta$-psh functions. Thus $\max\{\widehat p^* u - \rho^* v_1, \rho^* v\}$ is also $\theta$-psh. 
Put $R:= \ddc u+ \omega_{n-1}$.  Observe that 
$$\max\{\pi^* u - v_1, v\} \ge \max\{ \pi^* u,  v\}+ O(1) \ge  \pi^* u + O(1)$$
on $E$ because $\pi^* u$ is bounded from above, and $v$ is locally bounded outside $0$. We infer that 
$$\max\{\widehat p^* u - \rho^* v_1, \rho^* v\} \ge \pi^* u+ O(1).$$
 Using this and the monotonicity of non-pluripolar products (Theorem \ref{th-mono-current11}) applied to $\ddc \max\{\widehat p^* u - \rho^* v_1, \rho^* v\}+ \theta$ and $\widehat p^* R$  yields
\begin{align}\label{ine-congthemkahler}
\int_{\widehat{\overline E}} (\theta^n - \langle \widehat T^k \rangle \wedge \theta^{n-k})  &\le \int_{\widehat{\overline E}} \bigg[\big( (\widehat p^* \omega_{n-1}+ \rho^* \omega_n)^k- \langle (\widehat p^* R+ \rho^* \omega_n)^{k}\rangle \big) \wedge \theta^{n-k}\bigg]\\
\nonumber
& = \sum_{j=0}^k   \binom{k}{j} \int_{\widehat{\overline E}}    \big( \widehat p^* \omega_{n-1}^j - \langle (\widehat p^* R)^j \rangle \big)\wedge \rho^* \omega_n^{k-j}  \wedge \theta^{n-k}.
\end{align}
Denote by $A_j$ the integral in the $j$-th summand of the right-hand side of the last inequality.  By Lemma \ref{le-pullbacksubmersion}, one gets 
$$A_j=  \int_{\widehat{\overline E}}\big( \widehat p^* \omega_{n-1}^j - \widehat p^* \langle R^j \rangle \big)\wedge \rho^* \omega_n^{k-j}  \wedge \theta^{n-k}.$$
Recall $k'=\min\{k,n-1\}$. Observe that for $j > k'$, one has $A_j = 0$ because both $\omega_{n-1}^n$ and $\langle R^n \rangle$ vanish (they are currents on $\P^{n-1}$). On the other hand, by the definition of the pull-back $\widehat p^*$, for $0 \le j \le k'$,  we get
\begin{align*}
A_j &= \int_{\P^{n-1}} \big(\omega_{n-1}^j - \langle R^j \rangle \big)\wedge \widehat p_*(\rho^* \omega_n^{k-j}  \wedge \theta^{n-k})\\
& \lesssim A'_j:= \int_{\P^{n-1}} \big(\omega_{n-1}^j - \langle R^j \rangle \big)\wedge \omega_{n-1}^{n-1-j},
\end{align*}
where we used the fact that $\{\omega_{n-1}^j \} \ge \{\langle R^j \rangle \}$ (a consequence of Theorem \ref{th-mono-current11}). 
For $0 \le j \le k'$, consider
\begin{align*}
A'_j- \tilde{e}_{k'}(u)&=  \int_{\P^{n-1}} \langle R^{k'} \rangle \wedge \omega_{n-1}^{n-1-k'}- \int_{\P^{n-1}} \langle R^{j} \rangle \wedge \omega_{n-1}^{n-1-j} \le 0
\end{align*}
by Theorem \ref{th-mono-current11} again and the fact that $R$ is more singular than $\omega_{n-1}$. Combining this and (\ref{ine-congthemkahler}) implies 
\begin{align}\label{ine-congthemkahlersualai}
\int_{\widehat{\overline E}} (\theta^n - \langle \widehat T^k \rangle \wedge \theta^{n-k})  \lesssim  \tilde{e}_{k'}(u).
\end{align}
Since 
$$\theta:=\ddc \psi_0 - \eta_{\widehat V}+ 2 \rho^* \omega_n,$$
we can express
\begin{align*}
\int_{\overline{\widehat E}} \theta^n  &=\int_{\overline{\widehat E}} (2 \rho^* \omega_n- \eta_{\widehat V})^n=  \int_{\overline{\widehat E}} 2 \rho^* \omega_n \wedge \theta^{n-1} - \int_{\overline{\widehat E}} \eta_{\widehat V} \wedge \theta^{n-1}
\end{align*}
Denote by $I$ the  second integral in the right-hand side of the last equality. Since $\rho^* \omega_n|_{\widehat V}= \rho^* (\omega_n|_{\{0\}}) =0$, we infer that  
$$I=  \int_{\widehat V} \theta^{n-1}= \int_{\widehat V} (-\eta_{\widehat V})^{n-1}= 1.$$
Hence 
\begin{align}\label{eq-tinhmassofthreta}
\int_{\overline{\widehat E}} \theta^n = \int_{\overline{\widehat E}} 2 \rho^* \omega_n \wedge \theta^{n-1} -1 = \cdots =  \int_{\overline E} (2\omega_n)^n -1 = 2^n -1.
\end{align}
Combining this,  (\ref{eq-tinhhigherlelongnumber}), (\ref{eq-tinhmassTn}), (\ref{ine-congthemkahlersualai}) and (\ref{eq-tinhmassofthreta}) yields that 
$$e_k(\varphi) - 1 \lesssim \tilde{e}_{k'}(u).$$
\\

\noindent
\textbf{Proof of the desired lower bound for higher Lelong numbers in Theorem \ref{the-chili-lelong2v}.}
We check the desired lower bound for $e_k(\varphi)$.  Since $\lambda$ is the Lelong number of $v$ at $0$, we get 
$$v \le \lambda \log \|z\|+ O(1)$$
 near $0$. Using this and Demailly's comparison of Lelong numbers, we see that it suffices to prove the desired inequality in the case where 
$$v= \lambda \big(\log \|z\|- \frac{1}{2} \log(1+ \|z\|^2)\big)= \lambda v_0.$$
Set 
$$I_j:= \int_{\widehat{\overline E}} \langle \widehat p^* R^j  \wedge   (\ddc \rho^* v+ \theta)^{k-j} \rangle \wedge \theta^{n-k} $$
and
$$I'_j:= \int_{\widehat{\overline E}} \langle \widehat T \wedge \widehat p^* R^j \wedge (\ddc \rho^* v+ \theta)^{k-1-j}  \rangle \wedge \theta^{n-k}$$
for $0 \le j \le k$. By (\ref{eq-tinhddctruv1}), we obtain
$$\widehat p^* R= \ddc (\widehat p^* u - \rho^* v_1)+ \rho^* \omega_n+ \widehat p^* \omega_{n-1}- \rho^* [H]=  \ddc (\widehat p^* u - \rho^* v_1)+ \theta- \rho^* [H].$$
By this and (\ref{eq-congthucTmu}) and Lemma \ref{le-ddmax-blocki}, we get  
\begin{align*}
I &:= \int_{\widehat{\overline E}} \langle \widehat T^k \rangle \wedge \theta^{n-k} \\
&=\int_{\widehat{\overline E}} \big\langle (\ddc \max\{\widehat p^* u - \rho^* v_1, \rho^* v\}+ \theta)^k \big\rangle \wedge \theta^{n-k}=   \sum_{j=0}^{k-1} I'_j - \sum_{j=1}^{k-1} I_j.
\end{align*}
We estimate $I_j,I'_j$.  
By the multi-linearity of non-pluripolar products and (\ref{eq-bang0nonpluripolarD}) (and remember that $v= \lambda v_0$), for $1 \le j \le n-1$,  we have 
\begin{align} \label{tinhIjchanduoi}
I_j  &= \int_{\widehat{\overline E}} \langle \widehat p^* R^j  \wedge   (\lambda \ddc \psi_{\widehat V}+ \lambda \ddc \psi_0+ \theta)^{k-j} \rangle \wedge \theta^{n-k}\\ 
\nonumber
&= \int_{\widehat{\overline E}}  \langle \widehat p^* R^j  \wedge   (\lambda [\widehat V] - \lambda \eta_{\widehat V}+  \theta)^{k-j} \rangle \wedge \theta^{n-k} \\
\nonumber
&=  \int_{\widehat{\overline E}} \langle \widehat p^* R^j  \wedge   (\theta - \lambda\eta_{\widehat V})^{k-j} \rangle \wedge \theta^{n-k} \\
\nonumber
&=  \int_{\widehat{\overline E}} \langle \widehat p^* R^j  \wedge   (\theta - \lambda\eta_{\widehat V})^{k-1-j} \wedge (- \lambda \eta_{\widehat V}) \rangle \wedge \theta^{n-k} + \int_{\widehat{\overline E}} \langle \widehat p^* R^j \rangle  \wedge   (\theta - \lambda \eta_{\widehat V})^{k-1-j} \wedge \theta \wedge \theta^{n-k}  \\
\nonumber
&=- \lambda \int_{\P^{n-1}} \langle  R^j \rangle  \wedge   ( (1+ \lambda) \omega_{n-1})^{k-1-j} \wedge \omega_{n-1}^{n-k} +\int_{\widehat{\overline E}} \langle \widehat p^* R^j \rangle  \wedge   (\theta - \lambda \eta_{\widehat V})^{k-1-j} \wedge  \theta^{n-k+1},
\end{align}
here we used the fact that  $\eta_{\widehat V}|_{\widehat V} $ is cohomologous to $-\omega_{n-1}$, and $\theta|_{\widehat V}$ is cohomologous to  $- \eta_{\widehat V}|_{\widehat V}$. 

We  treat $I'_j$. Observe  that 
$$-\lambda \eta_{\widehat V}+ \rho^* \omega_n+ \lambda\ddc \psi_0= \lambda (\ddc \psi_0- \eta_{\widehat V}+ \rho^* \omega_n)+ (1- \lambda) \rho^* \omega_n  \ge 0$$
by (\ref{tinh-etawidehatddcpsi0}).
Hence the cohomology class of $-\lambda \eta_{\widehat V}+ \theta$ is semi-positive.     Using this and  the monotonicity of non-pluripolar products again, we obtain
\begin{align*} 
I'_j  & = \int_{\widehat{\overline E}}  \langle \widehat T  \wedge (-  \lambda \eta_{\widehat V}+ \theta)^{k-1-j} \wedge \widehat p^* R^j \rangle \wedge \theta^{n-k}\\ 
\nonumber
& \le \int_{\widehat{\overline E}} \theta \wedge (\theta- \lambda \eta_{\widehat V})^{k-1-j} \wedge \langle \widehat p^* R^j \rangle \wedge \theta^{n-k}.
\end{align*}
It follows that 
\begin{align*}
I  &\le    \int_{\widehat{\overline E}}  \theta^{n-k+1} \wedge (\theta-  \lambda \eta_{\widehat V})^{k-1}
+ \quad   \sum_{j=1}^{k-1}  \lambda \int_{\P^{n-1}} \langle R^j\rangle  \wedge   ((1+\lambda)\omega_{n-1})^{k-1-j}\wedge \omega_{n-1}^{n-k}\\
& =  \int_{\widehat{\overline E}} \theta^{n}
+ \quad   \lambda \sum_{j=1}^{k-1} (1+\lambda)^{k-1-j} \int_{\P^{n-1}} \big(\langle R^j\rangle \wedge  \omega_{n-1}^{k-1-j} -  \omega_{n-1}^{k-1} \big)\wedge \omega_{n-1}^{n-k}.
\end{align*}
Consequently,
$$\int_{\widehat{\overline E}} (\theta^n - \langle \widehat T^k \rangle \wedge \theta^{n-k}) \ge  \lambda \int_{\P^{n-1}} \big(\omega_{n-1}^{k-1} -\langle  R^{k-1}\rangle  \big) \wedge \omega_{n-1}^{n-k} =  \lambda \tilde{e}_{k-1}(u).$$
This completes the proof of Theorem \ref{the-chili-lelong2v}.

We thus have finished the proof of Theorem \ref{the-chili-lelong2v}. We now deal with Corollary \ref{cor-phanvidu}.

For every psh function $w$ on an open subset $U$ of $\C^n$, recall that the multiplier ideal sheaf $\cali{I}(m w)$  is the ideal sheaf of holomorphic germs $f$ at a point $z \in U$ such that $|f|^2 e^{-2m w}$ is locally integrable with respect to the Lebesgue measure on $\C^n$ near $z$. 

Let $\psi$ be a psh function on the unit ball $\B$ in $\C^N$ ($\psi$ is not necessarily locally bounded outside $0$). For $m \in \N$, let $(\sigma_{jm})_{j \in \N}$ be a basis of the Hilbert space $\cali{H}_{m\psi}(\B)$ of holomorphic functions $f$ on $\B$ such that 
$$ \|f\|:= \int_{\B} |f|^2 e^{-2m \psi} d \, vol < \infty,$$
 where $vol$ is the Lebesgue measure on $\C^N$. It is a well-known fact that $(\sigma_{jm})_j$ generates the multiplier ideal sheaf $\cali{I}(m \psi)$ (see \cite[Proposition 5.7]{Demailly_analyticmethod}). Put 
$$\psi_m:= \frac{1}{2m} \log \sum_{j=1}^\infty |\sigma_{jm}|^2 = \frac{1}{m}\sup_{f \in \cali{H}_{m \psi}(\B): \|f\| \le 1}\log |f|.$$ 
Since $\cali{I}(m \psi)$ is a coherent sheaf (see \cite[Page 37]{Demailly_analyticmethod}), it is locally generated by finitely many holomorphic germs. Hence we see that $\psi_m$ has analytic singularities on $\B$.  By Demailly's analytic regularization of psh functions (\cite[Theorem 13.2]{Demailly_analyticmethod}), we have $\psi_m \ge \psi - C/m$ for some constant $C>0$ independent of $m$, and the sequence $(\psi_m)_m$ converges to $\psi$ in $L^1_{loc}$ as $m \to \infty$  and their Lelong numbers also converge to those of $\psi$. In general the sequence is not decreasing as pointed out in \cite{DanoKim-remark}.  Moreover, as shown by the following lemma, to some extent,  $(\psi_m)_m$ is indeed the best sequence to approximate $\psi$ among those having analytic singularities and less singular than $\psi$. 

\begin{lemma} \label{le-demailly-analytic} Let $\psi'$ be a psh germ at $0$ having analytic singularities such that $\psi'$ is less singular than $\psi$. Assume that $\psi$ is locally bounded outside $\{0\}$. Then we have  
$$e_k(\psi') \le \sup_{m \in \N} e_k(\psi_m) \le e_k(\psi)$$
for every $1 \le k \le N$.
\end{lemma}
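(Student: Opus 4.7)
The plan is to derive both inequalities from Demailly's comparison theorem for higher Lelong numbers: a bounded shift $\psi_1 \le \psi_2 + O(1)$ of psh germs implies $e_k(\psi_1) \ge e_k(\psi_2)$, and the two-sided bound $\psi_1 = \psi_2 + O(1)$ implies $e_k(\psi_1) = e_k(\psi_2)$. This will reduce the problem to producing appropriate sandwich inequalities between $\psi$, $\psi_m$ and the analogous Demailly approximants $\psi'_m$ of $\psi'$.

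For the right-hand inequality $\sup_m e_k(\psi_m) \le e_k(\psi)$, I would invoke the Ohsawa--Takegoshi-based regularization behind \cite[Theorem 14.2]{Demailly_analyticmethod}, which yields a constant $C>0$ with $\psi_m \ge \psi - C/m$ locally near $0$. Hence $\psi \le \psi_m + C/m$, and the comparison theorem gives $e_k(\psi_m) \le e_k(\psi)$ for every $m$; taking the sup closes this direction.

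For the left-hand inequality $e_k(\psi') \le \sup_m e_k(\psi_m)$, I would introduce the analogous Demailly approximations $\psi'_m = \frac{1}{m}\sup_{f \in \cali{H}_{m\psi'}(\B),\,\|f\|\le 1} \log|f|$ attached to $\psi'$. Since $\psi'$ is less singular than $\psi$, there exists $C>0$ with $\psi \le \psi'+C$ near $0$, whence $e^{-2m\psi'} \le e^{2mC}e^{-2m\psi}$ pointwise and therefore $\|f\|_{m\psi'} \le e^{mC}\|f\|_{m\psi}$ for every $f \in \cali{H}_{m\psi}(\B)$. Substituting $e^{-mC}f$ in the sup formula defining $\psi'_m$ gives $\psi_m \le \psi'_m + C$, and the comparison theorem produces $e_k(\psi'_m) \le e_k(\psi_m)$, so $\sup_m e_k(\psi'_m) \le \sup_m e_k(\psi_m)$. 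To finish, I need $e_k(\psi') \le \sup_m e_k(\psi'_m)$, and here the analytic-singularity hypothesis enters: writing $\psi' = c\log(|f_1|+\cdots+|f_M|)+O(1)$ with $c>0$ and choosing $m$ such that $cm \in \N$, a log-resolution of the ideal $(f_1,\ldots,f_M)$ combined with a \L ojasiewicz estimate should identify a generating set of $\cali{I}(m\psi')$ with (the integral closure of) $(f_1,\ldots,f_M)^{cm}$ and hence yield $\psi'_m = \psi' + O(1)$ near $0$. The two-sided comparison then gives $e_k(\psi'_m) = e_k(\psi')$ for such $m$, as required.

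The main obstacle will be this last ``stabilization'' step. General Ohsawa--Takegoshi only delivers $\psi'_m \ge \psi' - C/m$, and bare monotonicity produces merely $\sup_m e_k(\psi'_m) \le e_k(\psi')$; one really needs a matching upper bound $\psi'_m \le \psi' + O(1)$ with a constant independent of $m$ for a specific $m$, not just in the limit. This is classical for analytic singularities via principalization (and is essentially built into Demailly's analytic regularization), but must be stated with enough care to yield the exact equality $e_k(\psi'_m)=e_k(\psi')$ for some $m$ rather than a non-effective convergence as $m \to \infty$; all other steps are routine manipulations with $L^2$-norms and applications of Demailly's comparison theorem.
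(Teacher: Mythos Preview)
Your overall architecture matches the paper's exactly: compare $\psi_m$ with $\psi'_m$ via $\psi\le\psi'+O(1)$, deduce $e_k(\psi'_m)\le e_k(\psi_m)$, and then pass from $\psi'_m$ back to $\psi'$ using the analytic-singularity hypothesis. The right-hand inequality is also handled the same way.

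The gap is precisely at the step you flagged as the ``main obstacle,'' and your proposed resolution of it is wrong. The claim that $\mathcal{I}(m\psi')$ coincides with the integral closure of $(f_1,\ldots,f_M)^{cm}$, and hence that $\psi'_m=\psi'+O(1)$ for $cm\in\N$, fails because the multiplier ideal picks up the relative canonical divisor of the log resolution. Concretely, for $\psi'=\log\|z\|$ on $\C^2$ one has $\mathcal{I}(m\psi')=\mathfrak{m}^{m-1}$ (not $\mathfrak{m}^m$), so $\psi'_m=\tfrac{m-1}{m}\log\|z\|+O(1)$ and $e_2(\psi'_m)=\big(\tfrac{m-1}{m}\big)^2\neq 1=e_2(\psi')$ for every finite $m$. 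No single $m$ gives $e_k(\psi'_m)=e_k(\psi')$.

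What does hold, and what the paper uses, is the convergence $e_k(\psi'_m)\to e_k(\psi')$ as $m\to\infty$ for $\psi'$ with analytic singularities; this is \cite[Remark 5.9]{Demailly_analyticmethod}. Combined with your inequality $e_k(\psi'_m)\le e_k(\psi_m)$ this yields $\limsup_m e_k(\psi_m)\ge e_k(\psi')$, hence $\sup_m e_k(\psi_m)\ge e_k(\psi')$. So the ``non-effective convergence'' you set aside is in fact the correct and sufficient route; the stronger stabilization you aimed for is not available.
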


Note that the higher Lelong numbers at $0$ are well-defined because of the assumption that  $\psi$ is locally bounded outside $\{0\}$.

\proof   Without loss of generality, we can assume that $\psi'$ is defined on $\B$. Let $(\psi'_m)_m$ be the sequence of psh functions with analytic singularities on $\B$ given by Demailly's analytic approximation theorem for $\psi'$. Since $\psi'$ has analytic singularities, by comparison of Lelong numbers (\cite[Page 166]{Demailly_ag}), we can assume that $\psi'= c \log \sum_{j=1}^l |f_j|^{\alpha_j}$, where $f_1,\ldots, f_l$ are holomorphic on an open neighborhood of $0$ and $\alpha_1,\ldots, \alpha_m,c$ are nonnegative constants. In particular $e^{\psi'}$ is H\"older. By this and  \cite[Lemma 5.10]{Boucksom-Favre-Jonsson} (see also \cite{R-approx-psh}), there exists a constant $C>0$ independent of $m$ such that  
$$\psi' \le \psi'_m + O(1), \quad \psi'_m \le (1- C/m)\psi'+ O(1).$$
It follows that 
$$e_k(\psi'_m) \to e_k(\psi')$$
 as $m \to \infty$. On the other hand, since $\psi \le \psi' + O(1)$, we infer $\psi_m \le \psi'_m + O(1)$. Hence, $e_k(\psi_m) \ge e_k(\psi'_m)$. Letting $m \to \infty$ yields
$$\limsup_{m \to \infty} e_k(\psi_m) \ge e_k(\psi').$$
This finishes the proof. 
\endproof

\begin{lemma} \label{le-idealsheaf} Let $w$ be a psh function on an open subset $U$ on $\C^n$. If $u$ has zero Lelong number everywhere, then we have 
$$\cali{I}_0(m(w +\log \|z\|))= \cali{I}_0 (m (\pi^* u+ w+\log \|z\|)).$$
\end{lemma}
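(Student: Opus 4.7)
The inclusion $\cali{I}_0(m(\pi^{*}u+w+\log\|z\|)) \subseteq \cali{I}_0(m(w+\log\|z\|))$ is immediate: since $u$ is bounded above on the compact manifold $\P^{n-1}$, say $u\le C$, one has $\pi^{*}u\le C$ near $0$, so $|f|^{2}e^{-2m(w+\log\|z\|)}\le e^{2mC}\,|f|^{2}e^{-2m(\pi^{*}u+w+\log\|z\|)}$ and local integrability passes from the right-hand side to the left. All the real work lies in the reverse inclusion.

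For the reverse inclusion, set $\psi:=w+\log\|z\|$ and fix $f\in\cali{I}_0(m\psi)$. My plan is to bound $\int_{B_\epsilon}|f|^{2}e^{-2m\psi}\cdot e^{-2m\pi^{*}u}\,d\vol$ by Hölder's inequality with an exponent supplied by the Guan--Zhou strong openness theorem. Strong openness yields $a>1$ such that $|f|^{2}e^{-2am\psi}\in L^{1}_{\loc}$ near $0$; since $f$ is holomorphic and hence $|f|^{2(a-1)}$ is bounded near $0$, this upgrades the hypothesis to $|f|^{2}e^{-2m\psi}\in L^{a}_{\loc}$ near $0$. Hölder's inequality with conjugate exponent $b=a/(a-1)$ then reduces the matter to proving that $e^{-2bm\pi^{*}u}$ is locally integrable near $0$ in $\C^{n}$.

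For this last step, polar coordinates $z=r\zeta$ with $r\in(0,\epsilon)$ and $\zeta\in S^{2n-1}$ are the natural tool: the Lebesgue measure factors as $d\vol=r^{2n-1}dr\,d\sigma(\zeta)$, and crucially $\pi^{*}u(r\zeta)=u([\zeta])$ is $r$-independent. The radial integral is harmless, and integration over $S^{2n-1}$ descends through the Hopf fibration $S^{2n-1}\to\P^{n-1}$ to a constant multiple of $\int_{\P^{n-1}}e^{-2bm u}\,\omega_{n-1}^{n-1}$. Since $u$ has zero Lelong number at every point of the compact manifold $\P^{n-1}$, Skoda's integrability theorem yields $e^{-cu}\in L^{1}(\P^{n-1})$ for every $c>0$, and the proof concludes.

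The substantive step, and the main obstacle without an extra ingredient, is the upgrade from $L^{1}$ to $L^{a}$ integrability of $|f|^{2}e^{-2m\psi}$; this is exactly what the Guan--Zhou strong openness theorem delivers. The singular factor $e^{-2m\pi^{*}u}$ cannot be absorbed by any direct subharmonicity or mean-value argument on $f$ alone in the present generality, so strong openness (or some equivalent formulation) seems essential here.
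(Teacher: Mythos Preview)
Your proof is correct and follows essentially the same strategy as the paper: the Guan--Zhou strong openness theorem to gain an exponent, H\"older's inequality to split off the factor $e^{-c\pi^*u}$, and Skoda's integrability theorem (plus compactness of $\P^{n-1}$) to control the latter. The only cosmetic difference is that the paper carries out the Skoda step after pulling back to the blowup $\rho:\widehat E\to\C^n$ at $0$ (already available from the proof of the main theorem), applying Skoda to $u\circ\widehat p$ with respect to $\rho^*\vol_{\C^n}$, whereas you use polar coordinates and the Hopf fibration $S^{2n-1}\to\P^{n-1}$ to reduce directly to $\int_{\P^{n-1}}e^{-cu}\,\omega_{n-1}^{n-1}$; these are the same computation in different coordinate presentations.
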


We refer to \cite[Corollary 2.2]{ChiLi} for a similar statement when $w=0$, and also to \cite[Proposition 2.3]{DanoKim-siumetric} for a closely related result.   We deduce from the above lemma that
$$\cali{I}_0(m(\log\|z\|))= \cali{I}_0(m \varphi)$$
because  
$$\pi^* u+ \log \|z\| \le \varphi \le \log \|z\|+ C$$
near $0$, for some constant $C>0$.

\proof  Put $w':= w+ \log \|z\|$. Since $\pi^* u+w' \le w' +O(1)$, we get 
$\cali{I}_0 (m (\pi^* u+ w')) \subset \cali{I}_0(m w')$. We prove the converse inclusion. Let $\rho, \widehat E, \widehat V, \widehat p$ be as in the proof of Theorem \ref{the-chili-lelong2v}. Recall that $\widehat p = \pi \circ \rho$ outside $\widehat V$.  Let $f \in \cali{I}_0 (m w')$, \emph{i.e.,}
$$\int_U |f|^2 e^{-2m w'} \, vol_{\C^n}<\infty,$$
for some open subset $U$ containing $0$. Let $\mu:= \rho^* vol_{\C^n}$. We infer that 
$$\int_{\rho^{-1}(U)} |f \circ \rho|^2 e^{-2m  w' \circ \rho} \, d \mu < \infty.$$
Direct computations show  that $\mu= |g|^2 \vol_{\C^n}$ locally near every point in $\widehat V$ for some local holomorphic function $g$. 
By this and  the strong openness conjecture (which is now a theorem, see \cite{Guan-Zhou-strong-openness} and also \cite{Hiep-openness}), there exists a constant $\epsilon>0$ such that 
\begin{align}\label{ine-openness}
\int_{\rho^{-1}(U)} |f \circ \rho|^2 e^{-2(m+\epsilon)  w' \circ \rho} \, d \mu < \infty
\end{align}
(we shrink $U$ if necessary). On the other hand, since $u$ has zero Lelong number everywhere, Skoda's estimate (\cite[Lemma 5.6]{Demailly_analyticmethod} or \cite{Skoda-lelong})  implies that 
\begin{align} \label{ine-skoda}
\int_{\rho^{-1}(U)} e^{-2 q (u \circ \widehat p)} \, d \mu < \infty
\end{align}
for every constant $q>0$ (we shrink $U$ if necessary). This combined with H\"older's inequality yields that 
\begin{multline*}
\int_{\rho^{-1}(U)} |f \circ \rho|^2  e^{-2m( (\pi^*u) \circ \rho + w' \circ \rho)} \, d \mu\le   \bigg( \int_{\rho^{-1}(U)} |f \circ \rho|^2 e^{-2(m+ \epsilon)w' \circ \rho} \, d \mu \bigg)^{\frac{m}{m+\epsilon}} \times \\
\bigg( \int_{\rho^{-1}(U)} |f \circ \rho|^2 e^{-2 \frac{m(m+\epsilon)}{\epsilon} u \circ \widehat p} \, d \mu \bigg)^{\frac{\epsilon}{m+\epsilon}}.
\end{multline*} 
This combined with (\ref{ine-skoda}) and (\ref{ine-openness}) gives
$$\int_{\rho^{-1}(U)} |f \circ \rho|^2  e^{-2m( (\pi^*u)\circ \rho + w' \circ \rho)} \, d \mu < \infty.$$
Hence $f \in \cali{I}_0 (m (\pi^* u+ w'))$.  The desired assertion follows. This finishes the proof.  
\endproof

As kindly pointed out to us by one of referees, 
by using similar arguments as above with the strong openness theorem and H\"older inequality, one can also show  that 
\begin{align}\label{ine-refereeremark}
\cali{I}_0(m(w +\log \|z\|))= \cali{I}_0 (m (\phi+ w+\log \|z\|)),
\end{align}
for any psh function $\phi$ having zero Lelong number at $0$. We note however that Lemma \ref{le-idealsheaf} does not follow directly from (\ref{ine-refereeremark}) applied to $\phi= \pi^* u$ because the function $\pi^* u$ is not psh near $0$ (but $\pi^* u+ \log \|z\|$ is so). 

\begin{proof}[Proof of Corollary \ref{cor-phanvidu}] Since $\ddc u+ \omega_{n-1}$ has mass on some pluripolar set,  one gets $\tilde{e}_1(u)>0$. Consequently,  $\tilde{e}_{k}(u)>0$ for every $1 \le k \le n-1$ by (\ref{ine-ehigher}). Theorem \ref{the-chili-lelong2v} then shows that for $2 \le k \le n$, one has  $e_k(\varphi)>1$.

Since  $\varphi(z) \le \log \|z\|+C$ for some constant $C>0$, one gets 
\begin{align}\label{inebosung}
\sup_{\varphi' \in \cali{A}_\varphi} e_k(\varphi') \ge  e_k(\log \| z \|) =1.
\end{align}
On the other hand, let $(\varphi_m)_m, (\psi_m)_m$ be the sequence of psh functions with analytic singularities associated to $\varphi$, $\log \|z\|$ respectively as in the paragraph before Lemma \ref{le-demailly-analytic}. Since   $\varphi$ and $\log \|z\|$ are valuatively equivalent at $0$ by Lemma \ref{le-idealsheaf}, one sees that  $\varphi_m$ and $\psi_m$ are of the same singularity type. Hence 
$$e_k(\varphi_m) = e_k(\psi_m) \le e_k(\log \|z\|) =1.$$ 
By this and Lemma \ref{le-demailly-analytic}, there holds
$$\sup_{\varphi' \in \cali{A}_\varphi} e_k(\varphi') \le  \sup_{m \in \N} e_k(\varphi_m) \le 1.$$
This combined with (\ref{inebosung}) gives
$$\sup_{\varphi' \in \cali{A}_\varphi} e_k(\varphi') = 1.$$ 
The desired assertion thus follows.  
\end{proof}

\begin{remark}
One can obtain a slightly more general version of  Theorem \ref{the-chili-lelong2v} as follows. Let $v'$ be a psh function on an open neighborhood of $0 \in \C^n$ such that $v' - v =O(1)$ near $0$. Then  for $\varphi':= \max \{ \pi^* u+\log \|z\|,  v'+  \log \|z \|\}$, we have $e_k(\varphi')= e_k (\varphi)$ by Demailly's comparison of Lelong numbers. Hence Theorem \ref{the-chili-lelong2v} is still true for $\varphi'$ in place of $\varphi$. 
\end{remark}

\section{Quasi-psh functions with zero Lelong number everywhere} \label{sec-zerolelong}

In this section, we explain how to construct many examples of closed positive $(1,1)$-currents $T$ on a projective manifold such that $T$ has zero Lelong number everywhere and $T$ charges some pluripolar set.

We recall the following classical result; see \cite[Theorem 3, Page 31] {Carleson-small-set} (or see \cite{ChiLi} for an explicit construction).

\begin{lemma} \label{pro-Evanstheorem} There exist an uncountable compact polar set $A$ (which is also a generalized Cantor set) on $\C$  and a Borel probability measure $\mu$ supported on  $A$ such that $\mu$ has no atom, and hence the potential of $\mu$ has zero Lelong number everywhere.  
\end{lemma}

In the above result, we recall that the potential of $\mu$ is given by $p_\mu(z):= \int_\C \log |z-w| d\mu(w)$. Since $\mu$ is supported on a compact subset in $\C$, one sees that $p_\mu(z) - \frac{1}{2}\log(1+|z|^2)$ is a bounded function outside a disk of sufficiently large radius in $\C$. It follows that $p_\mu-\frac{1}{2}\log(1+|z|^2)$ can be extended to an $\omega_1$-psh function on $\P^1$ (where $\omega_1$ is the Fubini-Study form on $\P^1$) which has zero Lelong number everywhere and the current $\ddc (p_\mu-\frac{1}{2}\log(1+|z|^2)) +\omega_1$ has positive mass on a compact polar set.  

The following auxiliary result is a direct consequence of \cite[Theorem 5.1]{Kiselman-sublevel-set} or \cite[Theorem 2]{Favre-pullback-lelong}.

\begin{lemma}\label{le-pullbacklelong} Let $f: X_1 \to X_2$ be a surjective holomorphic map between connected complex manifolds. Let $T$ be a closed positive current of bi-degree $(1,1)$ on $X_2$ having zero Lelong number everywhere. Then the Lelong number of $f^* T$ at every point in $X_1$ is also zero.  
\end{lemma}

We also need the well-known fact about the Lelong number of the direct image of currents.

\begin{lemma}\label{le-pullbacklelong2} Let $f: X_1 \to X_2$ be a finite proper holomorphic map between complex manifolds. Let $T$ be a closed positive current of bi-degree $(1,1)$ on $X_1$ having zero Lelong number everywhere. Then the Lelong number of $f_* T$ at every point in $X_2$ is also zero.  
\end{lemma}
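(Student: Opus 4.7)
My plan is to deduce zero Lelong numbers of $f_*T$ at a point from the classical $L^1$-integrability characterization of zero Lelong numbers, combined with Jensen's inequality and the area formula for the finite proper map $f$. Since the question is local on $X_2$, I fix $y_0 \in X_2$ and aim to show $\nu(f_* T, y_0) = 0$.

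First I would localize. As $f$ is proper and finite, $f^{-1}(y_0) = \{p_1, \ldots, p_m\}$ is a finite set, and for a sufficiently small chart $V$ around $y_0$ one has $f^{-1}(V) = \bigsqcup_{j=1}^m U_j$ with $U_j$ a neighborhood of $p_j$. Shrinking $V$ we write $T|_{f^{-1}(V)} = \ddc u$ for a psh function $u$ on $f^{-1}(V)$. By the classical trace formula for finite proper holomorphic maps (essentially the statement that $f_*$ commutes with $\ddc$, together with the area formula identifying the pushforward of $u$ viewed as a $(0,0)$-current with the fibrewise sum), one has $f_*T|_V = \ddc \tilde u^*$ where
$$\tilde u(y) := \sum_{z \in f^{-1}(y)} u(z)$$
(counted with multiplicity) is defined off the branch locus of $f$, and $\tilde u^*$ denotes its upper semi-continuous regularization. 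Note that $\tilde u = \tilde u^*$ almost everywhere since the branch locus is a proper analytic subset of $X_2$.

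The core of the argument is Jensen's inequality applied to the convex function $s \mapsto e^{-cs}$, averaged over the $d := \deg f$ points of a generic fibre:
$$e^{-c\tilde u(y)/d} \le \frac{1}{d}\sum_{z \in f^{-1}(y)} e^{-cu(z)}.$$
Integrating over $V$ and using the area formula for $f$ (whose complex Jacobian $|\det df|^2$ is smooth and bounded on the relatively compact set $f^{-1}(V)$),
$$\int_V e^{-c\tilde u(y)/d}\, dV(y) \le \frac{1}{d}\int_{f^{-1}(V)} e^{-cu(z)}|\det df(z)|^2 \, dV(z) \lesssim \int_{f^{-1}(V)} e^{-cu(z)}\, dV(z).$$
By Skoda's integrability theorem and the hypothesis $\nu(u, p_j) = 0$ for every $j$, the right-hand side is finite for every $c > 0$ (shrinking $V$ depending on $c$ if necessary). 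Hence, for every $c' > 0$, $e^{-c'\tilde u^*}$ is locally integrable at $y_0$.

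To conclude I would invoke the standard characterization: for a psh function $w$, $\nu(w, y_0) = 0$ if and only if $e^{-cw}$ is locally integrable at $y_0$ for every $c > 0$. The non-trivial direction follows from the fact that $\nu(w, y_0) = \nu_0 > 0$ forces $\max_{\|y - y_0\|=r} w(y) \le (\nu_0 - \epsilon)\log r$ for every $\epsilon > 0$ and small $r$, whence the pointwise bound $w(y) \le (\nu_0 - \epsilon)\log \|y-y_0\|$, so $\int e^{-cw}$ diverges on any ball around $y_0$ once $c > 2n/(\nu_0 - \epsilon)$. Applying this to $w = \tilde u^*$ yields $\nu(f_*T, y_0) = 0$. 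The main technicality I anticipate is justifying the trace formula $f_*\ddc u = \ddc \tilde u^*$ carefully across the branch locus of $f$, but this is classical for finite proper holomorphic maps.
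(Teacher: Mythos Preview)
Your argument is correct and coincides with the paper's own direct proof: both write $f_*T=\ddc(f_*u)$ with $f_*u$ the fibrewise trace, invoke Skoda's integrability ($\nu(u,\cdot)\equiv 0 \Rightarrow e^{-Mu}\in L^1_{\loc}$ for every $M>0$), use Jensen/AM--GM together with the area formula to bound $\int e^{-M f_*u}$ by $\int e^{-M' u}$, and conclude via the converse ($e^{-cw}\in L^1_{\loc}$ for all $c$ forces $\nu(w,\cdot)=0$). The paper additionally notes that the statement is a direct consequence of \cite[Theorem~9.12]{Demailly_ag}, but its self-contained argument is precisely yours.
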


\proof This is a direct consequence of \cite[Theorem 9.12]{Demailly_ag}. One can also argue directly as follows. Express $T= \ddc u+ \theta$, where $\theta$ is a smooth closed form and $u$ is a $\theta$-psh function. By hypothesis, the Lelong number of $u$ is zero everywhere. We have $f_*T= \ddc f_* u+ f_* \theta$. Since $f$ is finite, the form $f_* \theta$ is continuous. Hence $f_*u$ is a quasi-psh function on $X_2$, and $\nu(f_*u, x)= \nu(f_*T,x)$ for every $x \in X_2$. Let $K_j$ be a compact subset in $X_j$ for $j=1,2$ such that $f^{-1}(K_2) \subset K_1$. By subtracting a constant from $u$, we can assume $u$ is negative on $K_1$. Using Skoda's estimate (\cite{Skoda_integrability}) and the fact that $\nu(u,\cdot)\equiv 0$ implies that 
$$\int_{K_1} e^{- M u} d\, vol_1 < \infty$$   
 for every constant $M>0$, and $vol_1$ is a smooth volume form on $X_1$. It follows that for every smooth form $vol_2$ on $X_2$ one gets
$$\int_{K_2} e^{- M f_* u} d \, vol_2 \le \int_{K_1} e^{- M  u} d\, vol_1 <\infty$$
for every constant $M>0$. Hence $\nu(f_*u, x)=0$ for every $x \in X_2$. This finishes the proof. 
\endproof

\begin{proposition} \label{pro-Tzeropluri} Let $X$ be a projective manifold. Then there exists a closed positive current $T$ of bi-degree $(1,1)$ on $X$ such that $T$ has zero Lelong number everywhere and $T$ charges some pluripolar set, \emph{i.e.,} the trace measure of $T$ has positive mass on some pluripolar set. 
\end{proposition}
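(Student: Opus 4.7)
The plan is to transport a suitable closed positive $(1,1)$-current from $\P^1$ to $X$ through a projective modification, combining pullback and pushforward.

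First, pick an uncountable compact polar set $A\subset\C$ (e.g., a generalized Cantor set) and apply Lemma \ref{pro-Evanstheorem} to obtain a Borel probability measure $\mu$ supported on $A$ and having no atoms. Viewed as a closed positive $(1,1)$-current $T_0$ on $\P^1$, the measure has Lelong number $\mu(\{z\})=0$ at every $z\in\P^1$, while $T_0$ charges the pluripolar set $A$. Next, since $X$ is projective with $\dim X\ge 1$, it admits a non-constant meromorphic function, hence a non-constant rational map $\phi:X\dashrightarrow\P^1$. Resolving its indeterminacy locus by Hironaka yields a projective manifold $\tilde X$, a birational morphism $\pi:\tilde X\to X$ (a composition of blowups along smooth centers) with exceptional divisor $E$, and a surjective holomorphic map $\tilde f:\tilde X\to \P^1$ agreeing with $\phi\circ\pi$ off $E$. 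Set $\tilde T:=\tilde f^*T_0$; by Lemma \ref{le-pullbacklelong} this closed positive $(1,1)$-current has zero Lelong number everywhere, and $\mathrm{supp}(\tilde T)\subset\tilde f^{-1}(A)$ is pluripolar in $\tilde X$. Define $T:=\pi_*\tilde T$.

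For the support of $T$: $\mathrm{supp}(T)$ lies in $\pi(\tilde f^{-1}(A))$, which is the union of the biholomorphic image of $\tilde f^{-1}(A)\setminus E$ (pluripolar, since biholomorphic images of pluripolar sets are pluripolar) and a subset of the proper analytic set $\pi(E)\subset X$ (pluripolar). The total mass of $T$ is positive: by the projection formula, $\int_X T\wedge \omega^{n-1}=\int_{\tilde X}\tilde T\wedge (\pi^*\omega)^{n-1}$, and the right-hand side is positive because for generic $a\in A$ the fiber $\tilde f^{-1}(a)$ is not contained in $E$ while $\pi^*\omega$ is strictly positive on $\tilde X\setminus E$. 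Hence $T$ does charge the pluripolar set $\mathrm{supp}(T)$.

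The main obstacle is the Lelong-number claim, because Lemma \ref{le-pullbacklelong2} applies only to finite proper maps while $\pi$ is birational. The strategy is to mimic its Skoda-integrability proof directly. On a neighborhood of any fixed $x\in X$, write a local potential $\tilde T=\ddc \tilde u+\tilde\theta$ with $\tilde \theta$ smooth and $\tilde u$ quasi-psh of zero Lelong number; Skoda's integrability theorem then gives $e^{-M\tilde u}\in L^{1}_{\mathrm{loc}}$ for every $M>0$. With $g:=\pi_*\tilde u$, a local potential of $T$ on a neighborhood $V$ of $x$, and since $\pi$ is a biholomorphism off the measure-zero set $E$ with holomorphic Jacobian bounded on compact subsets, the change of variables $y=\pi(\tilde y)$ yields
\[
\int_V e^{-Mg}\,dV_X\le C\int_{\pi^{-1}(V)} e^{-M\tilde u}\,dV_{\tilde X}<\infty
\]
for every $M>0$. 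The converse direction of Skoda's estimate then forces $\nu(T,x)=0$, completing the construction.
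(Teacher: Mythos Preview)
Your construction has a genuine gap in the Lelong-number step. The claim that $g:=\pi_*\tilde u$ is a local potential of $T=\pi_*\tilde T$ is incorrect when $\pi$ is a birational morphism that is not finite. From $\tilde T=\ddc\tilde u+\tilde\theta$ you get $T=\ddc(\pi_*\tilde u)+\pi_*\tilde\theta$, but $\pi_*\tilde\theta$ is in general \emph{not} smooth (or even bounded) along the image $\pi(E)$ of the exceptional divisor; consequently $\pi_*\tilde u$ and a genuine local psh potential $g$ of $T$ differ by an unbounded term, and your change-of-variables inequality does not control $\int_V e^{-Mg}$. Concretely, take $X=\P^2$, $\phi([z_0:z_1:z_2])=[z_1:z_2]$, and resolve at $p=[1:0:0]$. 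Then $\tilde T=\tilde f^*T_0=\int_A[\tilde\ell_a]\,d\mu(a)$ (fibers of $\tilde f$), and $T=\pi_*\tilde T=\int_A[\ell_a]\,d\mu(a)$ is an average of lines through $p$. One computes directly $\nu(T,p)=\mu(A)=1>0$, even though $\tilde T$ has zero Lelong number everywhere. Equivalently, $\pi^*T=\tilde T+[E]$, so $g\circ\pi$ has Lelong number $1$ along $E$ while $\tilde u$ has Lelong number $0$; the two are \emph{not} comparable up to $O(1)$, and the Skoda argument collapses.

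The paper circumvents this by never pushing forward along a non-finite map. It first pulls $T_0$ back to $(\P^1)^n$ via a projection (Lemma~\ref{le-pullbacklelong}), then pushes forward to $\P^n$ along a \emph{finite} map $(\P^1)^n\to\P^n$ (so Lemma~\ref{le-pullbacklelong2} applies and zero Lelong numbers are preserved), and finally pulls back to $X$ along a finite surjection $g:X\to\P^n$ (Noether normalization; Lemma~\ref{le-pullbacklelong} again). The finiteness of these maps is exactly what makes $f_*\theta$ continuous and keeps the Skoda argument valid. Your route via resolving a rational map $X\dashrightarrow\P^1$ cannot be repaired without an additional mechanism to kill the Lelong number created at the indeterminacy locus.
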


\proof  Let $n:= \dim X$. We first prove the desired assertion when $X= \P^n$.  Let $T_1$ be a current on $\P^1$ such that $T_1$ has zero Lelong number everywhere and $T_1$ has mass on some compact pluripolar set. Let $p_1: (\P^1)^n \to \P^1$ be the natural projection to the first component of $(\P^1)^n$. Let $T_1':= p_1^* T_1$. Hence, $T'_1$ charges some compact pluripolar set $A$ on $(\P^1)^n$. Moreover, by Lemma \ref{le-pullbacklelong},  we see that $T_1'$ has zero Lelong number everywhere. 

Consider now a finite map $f: (\P^1)^n \to \P^n$ such that $f$ is a covering map outside some proper analytic subset $Y$ of $(\P^1)^n$, and $T'_1$ has mass on $A \backslash Y$  (such maps exist abundantly; to see it, one just needs to embed $(\P^1)^n$ into a projective space $\P^N$ and use natural projections from $\P^N$ to $\P^n$).   By Lemma \ref{le-pullbacklelong2} and properties of $T'_1$, the current $T''_1: = f_* T'_1$ has zero Lelong number everywhere. Using now the fact that $f$ is a covering on $(\P^1)^n \backslash Y$, we see that the mass of $T''_1$ on  $f(A) \backslash f(Y)$ is positive. Since $f(A)$ is pluripolar, we deduce that $T''_1$ charges the compact pluripolar set $A'':=f(A)$. Hence we have proved the desired assertion for $X= \P^n$.  
\\

\noindent
\textbf{Claim.}  For every (not necessarily nonsingular) algebraic variety $X$, there exists a finite holomorphic map $g: X \to \P^n$ such that $g$ is a covering map outside some proper analytic set $Y_1 \subset X$ and $T''_1$  has mass on $A'' \backslash g(Y_1)$. 
\\

\noindent
Assume the Claim holds for the moment. We explain how to finish the proof. Lemma \ref{le-pullbacklelong} implies that $T:= g^* T''_1$ has zero Lelong number everywhere. We also see that $T$ has mass on the pluripolar set $g^{-1}(A'')$ because $T''_1$ charges $A'' \backslash g(Y_1)$ and $g$ is a covering map on $X \backslash Y_1$. 

It remains to check Claim.  We consider $X$ as a variety of dimension $n$ in $\P^N$. We do it by induction on $m:=N-n$. If $m=0$, there is nothing to prove. Suppose now the claim holds for every variety of dimension $n+1$ in $\P^N$. We now check it for $X$ of dimension $n$. Let $a\in \P^N \backslash X$ and $H$ be a hyperplane in $\P^N$ which does not contain $a$. Let $p_a: \P^N\backslash \{a\} \to H$ be the natural projection with center $a$ to $H$. Recall that $p_a$  is defined as follows: for every $b \in \P^N\backslash \{a\}$, the point $p_a(b)$ is the intersection of $H$ with the projective line passing through $a$ and $b$. 

Since $a$ is not in $X$, the fiber of $p_a$ cuts $X$ at isolated points. Thus the restriction of $p_a$ to $X$ is a finite map, and by compactness of $X$, the image $Y:=p_a(X)$ is again a variety of dimension $n$ in $H \approx \P^{N-1}$. Applying now the induction hypothesis to $Y$ in $H$, we obtain a finite map $p_Y: Y \to \P^n$.  Define $\tilde{g}:= p_Y \circ p_a$. One sees that $\tilde{g}$ is a finite map from $X$ to $\P^n$.  Let $Y_1$ be the set of critical points of $\tilde{g}$ which is a proper analytic set in $X$. Let $b\in A''$ be a point such that $T''_1$ charges every open neighborhood of $b$ in $A''$. Such a point exists because otherwise for every point in $A''$ one could find an open neighorhood of that point in $A''$ such that $T''_1$ has no mass there. Hence, one obtains an open cover $(U_j)_j$ of $A''$ and $T''_1$ has no mass on $U_j$ for every $j$. Using this and the compactness of $A''$, we can find finitely many $U_{j_1}, \ldots, U_{j_s}$ covering $A''$ such that $T''_1$ has no mass on $U_{j_t}$ for $1 \le t \le s$, this is a contradiction because the mass of (the trace measure of) $T''_1$ on $A''= \cup_{1 \le t \le s} U_{j_t}$ is positive (one actually does not need to use the compactness of $A''$ because by the Lindel\"of property every open cover of $A''$ admits a countable subcover).  

Now using the transitivity of the automorphism group of $\P^n$, there is  an (linear) autormorphism $q$ of $H$ so that  $q(\tilde{g}(Y_1))$ does not contain $b$.  Put $g:= q \circ \tilde{g}$.   By the choice of $q$, $T''_1$ charges the complement in $A''$ of $g(Y_1)=q(\tilde{g}(Y_1))$ because the latter (closed) set does not contain $b$. Claim follows.
This finishes the proof.
\endproof

\begin{corollary} \label{cor-currentvanishinlelong}
Let $X$ be a projective manifold. Then there exist a K\"ahler form $\eta$ and  a closed positive current $T$ cohomologous to $\eta$ such that $T$ has zero Lelong number everywhere and 
$$0< \int_X \langle T^n \rangle \le \int_X \langle T \rangle \wedge \eta^{n-1} < \int_X \eta^n.$$
\end{corollary}

\proof Let $\omega$ be a K\"ahler form on $X$.  Let $T$ be the current in Proposition \ref{pro-Tzeropluri}. Thus $T$ has zero Lelong number everywhere and $T$ charges some pluripolar set $A \subset X$. Let $\theta$ be a smooth closed form in the cohomology class of $T$. Let $M>0$ be a big enough constant such that $\eta:=\theta+ M \omega$ is K\"ahler form. Let $T':= T+ M\omega$. Thus $T'$ is cohomologous to $\eta$, and  $T'$ charges $A$ and has zero Lelong number everywhere. By multi-linearity of non-pluripolar products, one has
$$\int_X \langle (T'+ M \omega)^n \rangle \ge \int_X (M\omega)^n >0.$$
The desired inequality 
$$ \int_X \langle T^n \rangle \le \int_X \langle T \rangle \wedge \eta^{n-1}$$
is just a direct consequence of monotonicity of non-pluripolar products (see Theorem \ref{th-mono-current11}). Note that $\langle T \rangle$ is  the non-pluripolar product of only one copy of $T$. It follows that $T= \langle T \rangle + \bold{1}_{E} T$, where $E$ is the polar locus of $T$, that means $E$ is the set where the potentials of $T$ are equal to $-\infty$. Since $\langle T \rangle$ has no mass on pluripolar sets and $T$ has mass on the pluripolar set $A$, we infer that $\bold{1}_E T$ has mass on $A$. In particular $\bold{1}_E T \not =0$. Consequently 
$$\int_X  T \wedge \eta^{n-1} = \int_X \langle T \rangle \wedge \eta^{n-1}+ \int_X \bold{1}_E T \wedge \eta^{n-1}> \int_X \langle T \rangle \wedge \eta^{n-1}.$$
On the other hand, since $T$ is cohomologous to $\eta$, there holds
$$\int_X \eta^n= \int_X  T \wedge \eta^{n-1}.$$
Thus we obtain 
$$\int_X \eta^n > \int_X \langle T \rangle \wedge \eta^{n-1}.$$
This finishes the proof.
\endproof
In the next paragraphs we explain how to use complex Monge-Amp\`ere equations to produce more examples of closed positive $(1,1)$-currents having zero Lelong number everywhere and charging some pluripolar set. 

Let $X$ be now a compact K\"ahler manifold of dimension $n$. Let $\alpha$ be a big cohomology class in $X$ and $\theta$ a smooth closed $(1,1)$-form in $\alpha$. We recall that $\langle \alpha^n \rangle$ is the volume of $\alpha$ which is defined as follows (see \cite{BEGZ}). Let $T_{\min}$ be a closed positive current of minimal singularities in $\alpha$, that means for every closed positive current $T'$ in $\alpha$, one has that $T'$ is more singular than $T_{\min}$, or equivalently, if $u',u_{\min}$   are potentials of $T', T_{\min}$ respectively then $u' \le u_{\min}+ C$ for some constant $C$. Put
 $$V_\theta:= \sup\{u: \text{$u$ is $\theta$-psh and $u \le 0$}\}$$
 which is a $\theta$-psh function. Thus $\ddc V_\theta+ \theta$ is a current of minimal singularity in $\alpha$. We note that currents of minimal singularities are not unique in general (as it is clear in the case where $\alpha$ is K\"ahler). Define 
$$\langle \alpha^n \rangle:= \int_X \langle T_{\min}^n \rangle$$ 
which is independent of the choice of $T_{\min}$ thanks to the monotonicity of non-pluripolar products (see Theorem \ref{th-mono-current11}).

We now assume that  there exists a $\theta$-psh function $u$  such that  $\nu(u,x)=0$ for every $x \in X$, and  
$$0< \int_X \langle (\ddc u+\theta)^n \rangle  < \langle \alpha^n\rangle$$
(the second inequality means that $u$ is not of full Monge-Amp\`ere mass). 
This assumption forces $\alpha$ to be nef because by Demailly's analytic approximation of psh functions, there exists a sequence of smooth closed positive forms converging to $ \ddc u+\theta$, hence,  the cohomology class $\alpha$ of $\ddc u+ \theta$ must be nef. Examples of $u$ and  $\alpha$ are provided by Corollary \ref{cor-currentvanishinlelong}.

 Let $\mu$ be a (non-zero) non-pluripolar positive  measure on $X$ such that 
$$\mu(X)= \int_{X} \langle (\ddc u +\theta)^n \rangle.$$
Put 
$$P[u]:= \big(\sup\{ \psi \in \PSH(X, \theta): \psi \le \min \{u+C,0\}, \, \text{ for some constant $C$}\}\big)^*,$$
where $\PSH(X, \theta)$ is the set of every $\theta$-psh functions. A crucial property of $P[u]$ is that $P\big[P[u]\big]= P[u]$ (see \cite[Theorem 3.14]{Lu-Darvas-DiNezza-mono}), in other words, $P[u]$ is a model $\theta$-psh function in the sense given in \cite{Lu-Darvas-DiNezza-mono}; see also \cite{Ross-WittNystrom}.

By \cite[Theorem 4.7]{Lu-Darvas-DiNezza-logconcave} (or \cite[Theorem 1.3]{Vu_Do-MA}), there exists a $\theta$-psh function $u_\mu$ such that $ \langle (\ddc u_\mu+\theta)^n \rangle = \mu$, and $u_\mu$ is more singular than $P[u]$. Note that 
$$\int_X \langle (\ddc P[u] + \theta)^n \rangle = \int_X \langle (\ddc u+ \theta)^n\rangle= \int_X \langle (\ddc u_\mu+\theta)^n \rangle$$  
by the monotonicity of non-pluripolar products and the definition of $P[u]$ (e.g., see \cite[Proposition 3.1]{Lu-Darvas-DiNezza-mono}).  

\begin{proposition}\label{prop-lelonggiongnhau} Let $v_1,v_2$ be $\theta$-psh functions on $X$ such that 
\begin{align}\label{eq-fullmass}
0< \int_X \langle (\ddc v_1+ \theta)^n\rangle = \int_X \langle (\ddc v_2+ \theta)^n\rangle,
\end{align}
and $v_1$ is more singular than $P[v_2]$. Then we have $\nu(v_1,x)= \nu(v_2,x)$ for every $x \in X$. 
\end{proposition}

\proof This result was proved in  \cite[Theorem 1.1]{Lu-Darvas-DiNezza-singularitytype} in the case where $v_2= V_\theta$ (hence $P[v_2]= V_\theta$); see also \cite[Theorem 1.1]{Vu_lelong-bigclass}. The proof of \cite[Theorem 1.1]{Lu-Darvas-DiNezza-singularitytype} works without changes for arbitrary $v_2$ with $v_2= P[v_2]$ and $v_1$ is more singular than $v_2$ provided that we have $P[v_1]= P[v_2]$. By  \cite[Theorem 3.14]{Lu-Darvas-DiNezza-mono}, this is always the case because of (\ref{eq-fullmass}). Hence we get 
$$\nu(v_2,x)= \nu(P[v_2],x), \quad \nu(v_1,x)= \nu(P[v_2],x)$$
for every $x$.  The desired assertion thus follows.
\endproof

By Proposition \ref{prop-lelonggiongnhau}, we obtain
$$\nu(u_\mu, x) = \nu(P[u],x)= \nu(u,x)=0$$ 
for every $x \in X$. Clearly for $\mu_1 \not = \mu_2$, we have $u_{\mu_1} \not = u_{\mu_2}$.  This gives us another rich source of $\theta$-psh functions with zero Lelong numbers and  not of full Monge-Amp\`ere mass.

\bibliography{biblio_family_MA,biblio_Viet_papers}
\bibliographystyle{siam}

\bigskip

\noindent
\Addresses
\end{document}